\documentclass[12pt,a4paper]{article}

\usepackage{latexsym}
\usepackage{amsmath}
\usepackage{amssymb}
\usepackage{arydshln}

\usepackage{cite}
\usepackage{stmaryrd}
\usepackage{enumerate}

\usepackage{hyperref}

\usepackage{color}
\usepackage{lineno}
\usepackage{graphicx}
\usepackage{ae}
\usepackage{amsmath}
\usepackage{amssymb}
\usepackage{latexsym}
\usepackage{url}
\usepackage{epsfig}
\usepackage{mathrsfs}
\usepackage{amsfonts}
\usepackage{amsthm}

\usepackage{float}
\usepackage{subfig}
\usepackage{tikz}
\usetikzlibrary{positioning}

\newtheorem{theorem}{Theorem}[section]

\newtheorem{prop}[theorem]{Proposition}
\newtheorem{lemma}[theorem]{Lemma}
\newtheorem{remark}{Remark}
\newtheorem{cor}[theorem]{Corollary}

\theoremstyle{definition}

\numberwithin{equation}{section} 
\allowdisplaybreaks    

\def\qed{\hfill$\Box$\vspace{12pt}}

\long\def\delete#1{}

\usetikzlibrary{decorations.markings}

\tikzstyle{vertex}=[circle, draw, inner sep=0pt, minimum size=6pt]
\tikzstyle{directed}=[postaction={decorate,
	decoration={markings,mark=at position 0.3 with {\arrow{stealth}}}
}]

\usepackage{xcolor}
\usepackage[normalem]{ulem}

\begin{document}
\title {Localization of spectral Tur{\'a}n theorems for signed graphs}

\author{Linfeng Xie$^{a,b}$,~Xiaogang Liu$^{a,b,}$\thanks{Supported by the National Natural Science Foundation of China (No. 12371358).}~$^,$\thanks{ Corresponding author. Email addresses: xielinfeng@mail.nwpu.edu.cn, xiaogliu@nwpu.edu.cn}~
	\\[2mm]
	{\small $^a$School of Mathematics and Statistics,}\\[-0.8ex]
	{\small Northwestern Polytechnical University, Xi'an, Shaanxi 710072, P.R.~China}\\
	{\small $^b$Xi'an-Budapest Joint Research Center for Combinatorics,}\\[-0.8ex]
	{\small Northwestern Polytechnical University, Xi'an, Shaanxi 710129, P.R. China}\\
}
\date{}

\openup 0.5\jot
\maketitle

\begin{abstract}
	In this paper, we extend localized Tur{\'a}n theorems to signed graphs and study the corresponding spectral Tur{\'a}n problems. Firstly, we establish a vertex-localized Tur{\'a}n-type inequality for signed graphs and characterize the extremal graphs reaching this bound. Secondly, we derive a sharp upper bound for the largest eigenvalue of a signed graph via localized Tur{\'a}n parameters, and identify the extremal graphs attaining equality. Finally, we generalize a walk-based local spectral Tur{\'a}n inequality to signed graphs. Our results generalize and improve several existing theorems for unsigned and signed graphs.

	\smallskip
	
	\emph{Keywords:} Signed graph; Largest eigenvalue; Tur{\'a}n problem; Localization
	
	\emph{Mathematics Subject Classification (2020):} 05C50, 05C22, 05C35
\end{abstract}

\section{Introduction}
A \emph{signed graph} $\Gamma=(G,\sigma)$ consists of an unsigned graph $G=(V(G),E(G))$ and a sign function $\sigma : E(G) \to \left \{+1, -1 \right \}$, where $G$ is called the \emph{underlying graph} of $\Gamma$. In 1946, Heider \cite{Hei-JP-1946} proposed balance theory, which laid the conceptual foundation for the development of signed graphs. Based on Heider's work, Harary \cite{Har-MichM-1953} first formally founded signed graph theory, aiming to mathematically formalize the balance theory governing group relationships. After nearly three decades, Chaiken \cite{Chaiken-SIAM-1982} and Zaslavsky \cite{Zaslavsky-DAM-1982} independently obtained the matrix-tree theorem for signed graphs. For more information on signed graphs, please refer to \cite{Beck-Zaslavsky-JCTB-2006,Zaslavsky-book-2010,Zaslavsky-EJC-2018,Zaslavsky-JCTB-1987}.

Let $\Gamma=(G,\sigma)$ be a signed graph with the vertex set $V(\Gamma)=\left \{v_{1},v_{2},\dots ,v_{n}\right \}$ and the edge set $E(\Gamma)=\left \{e_{1},e_{2},\dots ,e_{m}\right \}$. Denote by $|V(\Gamma)|$ and $|E(\Gamma)|$ the order and the size of $\Gamma$, respectively. Let $N_{\Gamma}(v_i)$ denote the set of neighbors of a vertex $v_i$ in $\Gamma$ and $d_{\Gamma}(v_i)=|N_{\Gamma}(v_i)|$ the degree of $v_i$ in $\Gamma$. An edge $v_i v_j$ is called a \emph{positive edge} (respectively, \emph{negative edge}) if $\sigma(v_i v_j)=+1$ (respectively, $\sigma(v_i v_j)=-1$). The set of all positive edges (respectively, negative edges) in $\Gamma$ is denoted by $E^{+}(\Gamma)$ (respectively, $E^{-}(\Gamma)$). Denote by $(G, +)$ (respectively, $(G, -)$) the signed graph whose edges are all positive (respectively, negative). The \emph{adjacency matrix} of $\Gamma$ is defined as $A(\Gamma)=(a_{ij}^{\sigma})$, where $a_{ij}^{\sigma}=\sigma(v_{i}v_{j})a_{ij}$ and $a_{ij}=1$ if $ v_{i} $ and $v_{j}$ are adjacent, and $a_{ij}=0$ otherwise. The eigenvalues of $A(\Gamma)$ are denoted by
$$
\lambda_{1}(\Gamma)\ge \lambda_{2}(\Gamma)\ge \dots \ge \lambda_{n}(\Gamma),
$$
which are called the \emph{adjacency spectrum} of $\Gamma$. The \emph{index} of $\Gamma$ is $\lambda_{1}(\Gamma)$.

Let $\emptyset \ne U\subseteq V(G)$. The operation of changing the signs of all edges between $U$ and $V(G) \setminus U$ is called a \emph{switching}, and is also referred to as \emph{switching $\Gamma$ at a vertex subset $U$}. A signed graph $\Gamma^{\prime}$ is said to be \emph{switching equivalent} to $\Gamma$ if $ \Gamma^{\prime}$ is obtained from $\Gamma$ by a finite sequence of switchings, denoted as $\Gamma\sim \Gamma^{\prime}$. A cycle is called \emph{positive} if the number of its negative edges is even; otherwise, \emph{negative}. A signed graph is called \emph{balanced} if each of its cycles is positive; otherwise, \emph{unbalanced}.

The classical Tur{\'a}n problem is to determine the maximum number of edges in an $n$-vertex graph that does not contain a given graph $F$ as a subgraph, and this value is denoted by $\text{ex}(n,F)$. Denote by $K_n$, $C_n$ and $P_n$ the \emph{complete graph}, the \emph{cycle} and the \emph{path} of order $n$, respectively. Let $C_{\ge k}$ denote the family of cycles with at least $k$ edges, where $k\ge 3$. In 1941, Tur{\'a}n \cite{Turan-MFL-1941} first determined $\text{ex}(n,K_{r+1})$ and characterized the extremal graphs. Later, in 1959, Erd\H{o}s and Gallai \cite{Erdos-Gallai-AMASH-1959} determined $\text{ex}(n,C_{\ge k})$ and $\text{ex}(n,P_k)$, and characterized the corresponding extremal graphs, respectively. For more results on classical Tur{\'a}n problems, we refer the reader to \cite{Moon-CJM-1968, Bondy-Simonovits-JCTB-1974, Alon-Krivelevich-Sudakov-CPC-2003, Bukh-Tait-CPC-2020}.

Recently, the localized Tur{\'a}n problem has attracted considerable attention. Let $G=(V(G),E(G))$ be a simple graph. Let $c(e)$ denote the maximum value $r$ such that $e$ occurs in a subgraph of $G$ isomorphic to $K_r$. Brada\v{c} \cite{Bradac-AR-2022} and Malec and Tompkins \cite{Malec-Tompkins-EJC-2023} independently proved that
\begin{equation}\label{the first localization result for ordinary graphs}
	\sum_{e\in E(G)}\frac{c(e)}{c(e)-1}\le \frac{n^2}{2},
\end{equation}
where equality holds  in \eqref{the first localization result for ordinary graphs} if and only if $G$ is a multi-partite graph with all parts of equal size. Let $p(e)$ denote the maximum value $k$ such that $e$ occurs in a subgraph of $G$ isomorphic to $P_k$. Along this line of research, Malec and Tompkins \cite{Malec-Tompkins-EJC-2023} further proved that
\[
\sum_{e\in E(G)}\frac{1}{p(e)}\le \frac{n}{2},
\]
where the equality holds if and only if each component of $G$ is a clique.

We say that a vertex $v\in V(G)$ is a \emph{cut vertex} if its removal disconnects $G$, and a graph is \emph{$2$-connected} if it contains no cut vertices. Let $q(e)$ denote the length of a longest cycle containing $e$ if $e$ belongs to some cycle of $G$, and set $q(e)=2$ otherwise. In 2025, Zhao and Zhang \cite{Zhao-Zhang-JGT-2025} prove that
\[
\sum_{e\in E(G)}\frac{1}{q(e)}\le \frac{n-1}{2},
\]
where the equality holds if and only if $G$ is connected, and each $2$-connected component of $G$ is a clique. For more results on localized Tur{\'a}n problem, we refer the reader to \cite{Long-Ning-AR-2025,Adak-Chandran-ar-2025,Liu-Ning-AR-2025,Kan-Kumar-Pragada-AR-2025,Liu-Ning-JCTB-2026}.

Let $c(v)$ denote the order of the largest clique containing $v$ in a graph $G$. Recently, Adak and Chandran obtained the following consequence.
\begin{theorem}\emph{(See \cite[Theorem~2.1]{Adak-Chandran-ar-2025})} \label{Adak-Chandran-ar-2025}
	Let $G$ be a graph with $n$ vertices and $m$ edges. Then
	\[
	m\le\frac{n}{2}\sum_{v \in V(G)} \frac{c(v)-1}{c(v)}.
	\]
	Equality holds if and only if $G$ is either a complete regular multi-partite graph or an empty graph.
\end{theorem}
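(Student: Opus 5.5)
We would prove the inequality by the Motzkin--Straus method applied to a weighting built from the local clique numbers. The quadratic form involved is
\[
g(x)=\sum_{uv\in E(G)} x_u x_v
\]
on the simplex $\Delta=\{x\ge 0,\ \sum_v x_v=1\}$. The natural choice suggested by the right-hand side is the uniform vector $x_v=1/n$, which gives $g(x)=m/n^2$. The claim is then equivalent to
\[
\frac{2m}{n^2}\le \frac1n\sum_v \Bigl(1-\frac1{c(v)}\Bigr).
\]

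This is a local refinement of Motzkin--Straus, which states $2g(x)\le 1-1/\omega$. We aim for the pointwise-weighted version
\[
2g(x)\le \sum_v x_v\Bigl(1-\frac{1}{c(v)}\Bigr)\qquad\text{for all } x\in\Delta .
\]
Plugging in the uniform vector then yields the theorem immediately.

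To prove the weighted inequality, set $h(x)=\sum_v x_v(1-1/c(v))-2g(x)$ and minimize $h$ over $\Delta$. We use the standard Motzkin--Straus shifting argument.
\begin{enumerate}[(i)]
\item Suppose $u,w$ are nonadjacent and both in the support of $x$. Moving mass $t$ from $w$ to $u$ changes $h$ linearly in $t$, because the $x_ux_w$ term is absent. So one of the two directions does not increase $h$, and we may shift all the mass onto one vertex.
\item Repeating (i), we reach a minimizer whose support $S$ is a clique.
\item On a clique $S$ with $|S|=s$, every $v\in S$ has $c(v)\ge s$. Hence
\[
h(x)\ge \Bigl(1-\frac1s\Bigr)-\Bigl(1-\sum_{v\in S}x_v^2\Bigr)=\sum_{v\in S}x_v^2-\frac1s\ge 0
\]
by Cauchy--Schwarz.
\end{enumerate}
So $h\ge 0$ on all of $\Delta$. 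The one detail to check is that the minimum exists by compactness and that the shifting never increases $h$, which holds by linearity.

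The main obstacle is the equality characterization. First, the empty graph and complete regular multipartite graphs are checked directly: in the latter, each $c(v)=k$ and $m=(1-1/k)n^2/2$.

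For the converse, assume equality and $m>0$. Then the uniform vector $u$ minimizes $h$ over $\Delta$. Since $u$ lies in the interior of $\Delta$, the KKT conditions give that
\[
\partial_v h = \Bigl(1-\frac1{c(v)}\Bigr)-2\,\frac{d(v)}{n}
\]
is constant in $v$. Along a nonadjacent pair $u,w$, the function $h$ is linear in the shift parameter $t$, so its slope there must be zero. Hence $h$ is constant along the whole segment, and we can push mass to a boundary point that is still a minimizer. Iterating this, equality must propagate to a clique-supported minimizer with $h=0$. By (iii), that requires uniform weights on the clique and $c(v)=s$ for every vertex of the clique.

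We expect to combine these facts to show that nonadjacency is an equivalence relation, i.e.\ that $G$ is complete multipartite. For a nonadjacent pair $u,w$ with a neighbour $z$ of $u$ that is not adjacent to $w$, we would construct a perturbation of $u$ that strictly decreases $h$. This follows the classical argument that Motzkin--Straus equality forces complete multipartite structure on the support.

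Once $G$ is complete multipartite, constancy of $\partial_v h$ forces all degrees to be equal, since $c(v)$ equals the number of parts for every vertex. Equal degrees mean equal part sizes. This gives the regular complete multipartite graphs and finishes the characterization.
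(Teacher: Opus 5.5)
The paper does not prove this statement. It is quoted from Adak and Chandran and used only as a black box in the proof of Theorem~\ref{signed graph version of Adak-Chandran theorem}, so your argument has to stand on its own. For the inequality, it does. Your vertex-weighted Motzkin--Straus bound $\mathbf{x}^{T}A(G)\mathbf{x}\le\sum_{v}x_v\bigl(1-\frac{1}{c(v)}\bigr)$ on the simplex is proved correctly:
\begin{enumerate}
\item[(a)] $h$ is affine along a nonadjacent pair, so mass can be pushed off one vertex without increasing $h$, until the support is a clique $S$.
\item[(b)] On that clique, $c(v)\ge|S|$ together with Cauchy--Schwarz gives $h\ge 0$.
\item[(c)] The uniform vector then yields the bound.
\end{enumerate}
This is the same shifting mechanism as the paper's Lemmas~\ref{induces a balanced clique} and~\ref{F(z)}. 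The difference is that your localization enters as a linear vertex term rather than as weights inside the quadratic form. That choice is exactly what makes $\frac1n\mathbf{1}$ produce Theorem~\ref{Adak-Chandran-ar-2025}. By contrast, Lemma~\ref{Liu-Ning-AR-2025-lemma-1} evaluated at $\frac1n\mathbf{1}$ gives a different localization, $\sum_v d(v)\frac{c(v)}{c(v)-1}\le n^2$.

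In the equality part, the step that carries all the weight is only announced (``we would construct a perturbation''). Meanwhile, the detour through a clique-supported minimizer tells you something about a single clique and is never used. The missing step is a short second-order computation. It transfers verbatim from the classical case, because the linear part of $h$ contributes nothing at second order.

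Write $\bar{\mathbf{u}}=\frac1n\mathbf{1}$ (your notation reuses $u$ for a vertex). We have $h\ge0$ on $\Delta$, $h(\bar{\mathbf u})=0$, and $\nabla h(\bar{\mathbf u})$ is a constant vector by your KKT step. Hence for every $\mathbf d$ with $\mathbf 1^{T}\mathbf d=0$ and all small $|t|$,
\[
0\le h(\bar{\mathbf u}+t\mathbf d)=h(\bar{\mathbf u})+t\,\nabla h(\bar{\mathbf u})^{T}\mathbf d-t^{2}\,\mathbf d^{T}A(G)\mathbf d=-t^{2}\,\mathbf d^{T}A(G)\mathbf d .
\]
Suppose $uw,zw\notin E(G)$ but $uz\in E(G)$. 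Take $\mathbf d=\mathbf e_u+\mathbf e_z-2\mathbf e_w$. Then $\mathbf d^{T}A(G)\mathbf d=2>0$, a contradiction. Hence non-adjacency is transitive and $G$ is complete multipartite.

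The same triple with $w$ an isolated vertex shows that for $m>0$ there are no isolated vertices. This is consistent with the statement, since isolated vertices destroy equality.

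Your last step then closes the argument. Every vertex has $c(v)=k$, the number of parts. So constancy of $\partial_v h(\bar{\mathbf u})=\bigl(1-\frac1k\bigr)-\frac{2d(v)}{n}$ forces $d(v)=n-|V_i|$ to be constant, where $V_i$ is the part containing $v$. That means all parts have equal size. With this filled in, the proof is complete.
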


For a signed graph $\Gamma$, let $\epsilon(\Gamma)$ denote the \emph{frustration index} of $\Gamma$, which is the minimum number of edges to remove for balance, and $c_b(v)$ the order of the largest balanced clique containing $v$ for $v\in V(\Gamma)$.

In this paper, we first extend Theorem \ref{Adak-Chandran-ar-2025} to signed graphs, as stated below.

\begin{theorem}\label{signed graph version of Adak-Chandran theorem}
	Let $\Gamma=(G,\sigma)$ be a signed graph with $n$ vertices and $m~(\ge 1)$ edges. Then
	\begin{equation}\label{signed graph version of Adak-Chandran theorem-1}
		m\le \epsilon(\Gamma)+\frac{n}{2}\sum_{v \in V(\Gamma)} \frac{c_b(v)-1}{c_b(v)}.
	\end{equation}
	Equality holds if and only if $\Gamma$ is switching equivalent to a complete regular multi-partite signed graph with all edges positive by adding $\epsilon(\Gamma)$ negative edges, up to isolated vertices.
\end{theorem}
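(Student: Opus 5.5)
The plan is to reduce to Theorem~\ref{Adak-Chandran-ar-2025} by deleting a minimum set of frustrating edges. Choose a set $F\subseteq E(\Gamma)$ with $|F|=\epsilon(\Gamma)$ such that $\Gamma-F$ is balanced. Since switching changes neither the edge set, $\epsilon(\Gamma)$, nor balance, we may switch and assume that $\Gamma-F=(H,+)$ for the spanning subgraph $H=G-F$. The edges of $\Gamma$ outside $H$ are exactly those in $F$. Hence $m=\epsilon(\Gamma)+|E(H)|$.

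Apply Theorem~\ref{Adak-Chandran-ar-2025} to $H$:
\[
|E(H)|\le \frac{n}{2}\sum_{v\in V(H)}\frac{c_H(v)-1}{c_H(v)}.
\]
Next compare $c_H(v)$ with $c_b(v)$. A clique $K$ of $H$ containing $v$ is, in $\Gamma$, a clique all of whose edges avoid $F$ and are therefore positive after switching. So $K$ is a balanced clique of $\Gamma$, and $c_H(v)\le c_b(v)$. The function $x\mapsto (x-1)/x$ is increasing, so each summand does not decrease when $c_H(v)$ is replaced by $c_b(v)$. This yields \eqref{signed graph version of Adak-Chandran theorem-1}. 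The monotonicity of $\epsilon$, $c_b$ and balance under switching is routine.

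For equality, we need both equality in Theorem~\ref{Adak-Chandran-ar-2025} for $H$ and $c_H(v)=c_b(v)$ for every $v$. The first condition means $H$ is a complete regular multipartite graph or empty. Since $m\ge1$, we need to handle the case where $H$ is empty, and I expect this to be the main obstacle. If $H$ is empty, then $m=\epsilon(\Gamma)$, which forces $\epsilon(\Gamma)=m$. But deleting a spanning forest's complement always balances a graph, so $\epsilon(\Gamma)\le m-(n-\text{number of components})$. This is less than $m$ as soon as there is an edge, a contradiction. One should double-check that this degenerate case is fully excluded. With the clause ``up to isolated vertices,'' $H$ should instead be read as a complete regular multipartite graph on its non-isolated vertices together with isolated vertices. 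Then the first equality condition matches Theorem~\ref{Adak-Chandran-ar-2025} applied with this convention, so the isolated-vertex bookkeeping in the sum needs care.

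So $\Gamma$ is, up to switching, $(H,+)$ with $H$ complete regular multipartite, plus the $\epsilon(\Gamma)$ edges of $F$. These edges are negative after the switching, since otherwise adding them keeps balance and contradicts the minimality of $F$. This gives the stated form.

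For the converse, suppose $\Gamma$ has this form. Then $\epsilon(\Gamma)$ equals the number of added negative edges, which is part of the hypothesis. We also need $c_b(v)$ to equal the number of parts $r$. A balanced clique larger than $r$ would have to use an edge of $F$, which lies inside a part. I would try to show that a balanced clique cannot contain such a negative edge together with enough positive edges to exceed size $r$. The key point is that a negative edge $xy$ inside a part, together with any common positive neighbour $z$, forms a negative triangle. Any clique of size at least $3$ containing $xy$ has such a $z$ from another part, unless it is contained in a part, which is impossible. Checking this, and checking that equality in Theorem~\ref{Adak-Chandran-ar-2025} then transfers, completes the proof.
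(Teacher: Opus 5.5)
Your argument is the paper's: switch so that the negative edges form a minimum frustrating set $F$, apply Theorem~\ref{Adak-Chandran-ar-2025} to the all-positive spanning subgraph $H=G-F$, and use $c_H(v)\le c_b(v)$. For the equality case you are more careful than the paper. The paper neither excludes the empty alternative in Theorem~\ref{Adak-Chandran-ar-2025} nor proves the converse. Your spanning-forest bound $\epsilon(\Gamma)\le m-(n-k)<m$, with $k$ the number of components, does exclude the empty case correctly.

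The one point to correct is isolated vertices: you try to accommodate them, but they must be excluded. Equality in Theorem~\ref{Adak-Chandran-ar-2025} does not permit isolated vertices, because the factor $n/2$ counts all $n$ vertices. A complete regular $r$-partite graph on $n_0<n$ vertices plus $n-n_0$ isolated vertices has $\frac{n_0^2}{2}(1-\frac{1}{r})$ edges, which is strictly less than $\frac{n}{2}n_0(1-\frac{1}{r})$. So in the equality case $H$ is a spanning complete regular $r$-partite graph with $r\ge 2$. This is exactly the situation in which every edge of $F$ lies inside a part, which your converse argument uses. It also means $\Gamma$ has no isolated vertices when equality holds. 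Indeed, a positive $K_2$ plus an isolated vertex gives $1<\frac{3}{2}$, so the clause ``up to isolated vertices'' in the statement can only be dropped, not proved.

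There is also a small slip in the converse. A clique of size at least $3$ inside a part is not impossible. However, all its edges lie in $F$, so it contains a triangle with three negative edges, which is negative. Combined with your negative triangle through a vertex of another part, this shows every balanced clique containing an edge of $F$ has exactly two vertices. Hence $c_b(v)=r$ for all $v$.
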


For a matrix $M \in \mathbb{R}^{m \times n}$, the \emph{Frobenius norm} of $M=(m_{ij})$ is defined as
$$
\|M\|_F := \left( \sum_{i=1}^m \sum_{j=1}^n m_{ij}^2 \right)^{1/2}.
$$
Let $B=(b_{ij})$ and $C=(c_{ij})$ be two matrices of the same dimensions. The \emph{Hadamard product} of $B$ and $C$, denoted by $B \circ C$,  is defined via element-wise multiplication: $(B \circ C)_{ij} = b_{ij}c_{ij}$. For a subset $S \subseteq [n] := \{1,2,\dots,n\}$, the \emph{characteristic vector} of $S$ is the vector $\mathbf{1}_S \in \{0,1\}^n$ whose $i$-th entry is $1$ if $i \in S$ and $0$ otherwise. Recently, Liu and Ning derived the following result.

\begin{theorem}\emph{(See \cite[Theorem~1.1]{Liu-Ning-AR-2025})}\label{Liu-Ning-AR-2025}
	Let $G$ be a weighted graph with edge weight $a(e)$ for each edge $e$ and at least one edge. Let $\lambda_1(G)$ be the largest eigenvalue of $G$. Then
	\[
	\lambda_1(G) \leq \sqrt{2 \sum_{e \in E(G)} \frac{c(e)-1}{c(e)} a(e)^2}.
	\]
	Equality holds if and only if $G$ is, up to isolated vertices, a complete $r$-partite graph for some $r$~$($with partition $V_1 \cup V_2 \cup \cdots \cup V_r$$)$, and there exists a vector $\boldsymbol{w} \in \mathbb{R}^n$ such that
	\begin{enumerate}
		\item[\rm (1)] $A(G) = \pm \sum_{i=1}^r (\mathbf{1}_{V_i} \circ \boldsymbol{w}) \big( (\mathbf{1} - \mathbf{1}_{V_i}) \circ \boldsymbol{w} \big)^\mathrm{T}$; and
		\item[\rm (2)]  $\|\mathbf{1}_{V_i} \circ \boldsymbol{w}\|^2 = \|\boldsymbol{w}\|^2 - \sqrt{1-1/r} \cdot \|A(G)\|_F$ for each $i \in [r]$.
	\end{enumerate}
\end{theorem}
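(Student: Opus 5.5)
The plan is to combine an eigenvector Rayleigh-quotient argument with Cauchy--Schwarz and a localized, weighted Motzkin--Straus inequality. Let $A=A(G)$ be the weighted adjacency matrix, with $A_{ij}=a(e)$ for $e=ij\in E(G)$ and $0$ otherwise. Let $\boldsymbol{x}$ be a unit eigenvector for $\lambda_1(G)$. Since $G$ has at least one edge, $\operatorname{tr}A=0$ and $A\neq 0$ give $\lambda_1(G)>0$. Then
\[
\lambda_1(G)=\boldsymbol{x}^{\mathrm T}A\boldsymbol{x}=2\sum_{ij\in E(G)}a(ij)\,x_ix_j .
\]
Split each summand as $a(ij)\sqrt{(c(ij)-1)/c(ij)}\cdot x_ix_j\sqrt{c(ij)/(c(ij)-1)}$ and apply Cauchy--Schwarz. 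This gives
\[
\lambda_1(G)^2\le \Bigl(2\sum_{e}\tfrac{c(e)-1}{c(e)}a(e)^2\Bigr)\Bigl(2\sum_{ij\in E(G)}\tfrac{c(ij)}{c(ij)-1}x_i^2x_j^2\Bigr).
\]
So it suffices to prove the following localized Motzkin--Straus inequality: for every $\boldsymbol y$ in the standard simplex (we take $y_i=x_i^2$),
\[
f(\boldsymbol y):=\sum_{ij\in E(G)}\frac{c(ij)}{c(ij)-1}\,y_iy_j\le \frac12 .
\]

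To prove the simplex inequality, take a maximizer $\boldsymbol y$ of $f$ whose support $S$ is minimal. Suppose $u,v\in S$ are nonadjacent. Shift mass along $\boldsymbol y+t(\mathbf e_u-\mathbf e_v)$. Because $uv\notin E(G)$, $f$ is linear in $t$ along this line. The coefficients $c(e)$ are fixed numbers of $G$ and do not change as $\boldsymbol y$ moves. Hence we can push $t$ until $y_u$ or $y_v$ vanishes without decreasing $f$, contradicting the minimality of $S$. Therefore $S$ spans a clique of size $s$. Every edge inside it lies in a $K_s$, so $c(e)\ge s$ and $c(e)/(c(e)-1)\le s/(s-1)$. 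It follows that
\[
f(\boldsymbol y)\le \frac{s}{s-1}\sum_{\{i,j\}\subseteq S}y_iy_j\le\frac{s}{s-1}\cdot\frac12\Bigl(1-\frac1s\Bigr)=\frac12 .
\]
This inequality is already a weighted form of \eqref{the first localization result for ordinary graphs}. Combining it with the Cauchy--Schwarz step gives the bound.

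The main obstacle is the equality characterization, and I would handle it as follows. Equality forces three things simultaneously:
\begin{enumerate}
\item[(i)] Cauchy--Schwarz is tight, so $a(ij)\frac{c(ij)-1}{c(ij)}$ is proportional to $x_ix_j$ on every edge;
\item[(ii)] $\boldsymbol y=(x_i^2)$ maximizes $f$;
\item[(iii)] every edge $ij$ with $x_ix_j\ne 0$ achieves $c(ij)=s$ in the clique step.
\end{enumerate}
From (ii), I would show that the maximizers of $f$ are exactly the $\boldsymbol y$ whose support induces a complete multipartite graph with parts $V_1,\dots,V_r$ and equal part masses $\sum_{i\in V_k}y_i=1/r$. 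This comes from redoing the shifting argument with strict-inequality bookkeeping, using the equality case of $\sum_{i<j}z_iz_j\le\frac12(1-1/r)$. On that support every edge then has $c(e)=r$.

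Next, I would show that edges outside the support of $\boldsymbol x$ cannot exist. Any such edge would contribute positively to the right-hand side of the theorem while contributing nothing to $\boldsymbol{x}^{\mathrm T}A\boldsymbol{x}$. So $G$ is complete $r$-partite up to isolated vertices.

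Finally, (i) gives $a(ij)=\kappa\,x_ix_j$ across parts, with $\kappa$ an appropriate constant. Setting $\boldsymbol w=\sqrt{|\kappa|}\,\boldsymbol x$ yields condition (1), with the sign $\pm$ coming from the sign of $\kappa$. Translating the equal part masses $\|\mathbf 1_{V_i}\circ\boldsymbol x\|^2=1/r$, together with the normalization $\|A\|_F$, into the $\boldsymbol w$-scale gives condition (2). For the converse, I would check that such an $A$ has $\boldsymbol w$ as an eigenvector with eigenvalue equal to the bound.
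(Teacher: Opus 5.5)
Your proof of the inequality is correct. It is the same argument the paper runs for its signed generalization, Theorem~\ref{Localization and the Largest Eigenvalue}: Cauchy--Schwarz against $y_i=x_i^2$, then the localized Motzkin--Straus bound via a minimal-support maximizer spanning a clique (Lemmas~\ref{induces a balanced clique} and~\ref{F(z)}). For this particular statement the paper gives no proof; it imports the equality case from Liu and Ning. The gaps are in your equality analysis.

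First, your structural claim about maximizers of $f$ is overstated, and your sketch does not prove it. The word ``exactly'' is false: on $K_3$, $\mathbf{y}=(\frac12,\frac12,0)$ has complete bipartite support with equal masses, yet $f(\mathbf{y})=\frac38$. More importantly, re-running the one-directional shift along a non-edge only shows that $f$ is constant on that segment. That says nothing about the support of the original $\mathbf{y}$. Your item (iii) has the same problem: it refers to a minimal-support maximizer, which $\mathbf{y}=(x_i^2)$ need not be. The missing idea is a second-order perturbation. If $u,v,w\in\mathrm{supp}(\mathbf{y})$ with $uw\in E(G)$ and $uv,vw\notin E(G)$, then
\[
f\bigl(\mathbf{y}+s(\mathbf{e}_u-\mathbf{e}_v)+t(\mathbf{e}_w-\mathbf{e}_v)\bigr)=f(\mathbf{y})+\beta s+\gamma t+\tfrac{c(uw)}{c(uw)-1}\,st .
\]
Interior optimality forces $\beta=\gamma=0$, and then small $s=t>0$ strictly increases $f$, a contradiction. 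So $\mathrm{supp}(\mathbf{y})$ induces a complete $k$-partite graph, and each of its edges lies in a $K_k$. Hence $f(\mathbf{y})\le\frac{k}{k-1}\sum_{i<l}m_im_l\le\frac12$, where $m_i$ are the part masses. Equality forces $c(e)=k$ on all support edges and $m_i=1/k$. With this in hand, the rest of your necessity argument works. Two small corrections there: edges meeting $V(G)\setminus\mathrm{supp}(\mathbf{x})$ are excluded directly by (i), because $a(e)\neq 0$. Also, (i) together with $f(\mathbf{y})=\frac12$ gives $a(ij)=\lambda_1(G)\frac{c(ij)}{c(ij)-1}x_ix_j$, so $\kappa=\lambda_1(G)>0$ and only the $+$ sign ever arises.

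Second, your converse fails for the minus sign. Suppose $A(G)=-\sum_i(\mathbf{1}_{V_i}\circ\mathbf{w})((\mathbf{1}-\mathbf{1}_{V_i})\circ\mathbf{w})^{\mathrm{T}}$ with common part mass $\mu=\|\mathbf{1}_{V_i}\circ\mathbf{w}\|^2$. Then the bound equals $(r-1)\mu$, but $A(G)\mathbf{w}=-(r-1)\mu\,\mathbf{w}$. So $\mathbf{w}$ only shows that the smallest eigenvalue is minus the bound; in fact $\lambda_1(G)=\mu$.

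For positive weights you can repair this, but you must supply the argument. Three parts would require $w_jw_k,\,w_kw_l,\,w_jw_l<0$ simultaneously, which is impossible, so $r=2$. Flipping the sign of $\mathbf{w}$ on $V_2$ then reduces to the $+$ case.

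If weights of both signs are allowed, the ``if'' direction as stated is false for $r\ge 3$. The triangle with all weights $-1$ and $\mathbf{w}=\mathbf{1}$ satisfies (1) and (2), but $\lambda_1=1<2$. So the $\pm$ is only correct if $\lambda_1(G)$ is read as the spectral radius. In that reading, you should also apply your necessity argument to $-A(G)$.
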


A \emph{signed subgraph} is a subgraph obtained by selecting a subset of vertices and edges from a signed graph while preserving the original sign of each retained edge. Suppose that we have an infinite sequence of signed graphs $F_1, F_2, \ldots$ such that each $F_i$ is a proper signed subgraph of $F_{i+1}$. Up to switching equivalence, let $\Gamma$ be an arbitrary signed graph of order $n$, and for each edge $e \in E(\Gamma)$, define
\begin{equation}\label{localized version of extremal problems in signed graphs}
	h(e) = \max\left\{i \mid e \text{~lies~in~a~signed~subgraph~of~} \Gamma
	\text{~isomorphic~to~} F_i \right\}.
\end{equation}

The second main contribution of this paper is to generalize Theorem \ref{Liu-Ning-AR-2025} to signed graphs, as follows.

\begin{theorem}\label{Localization and the Largest Eigenvalue}
	Let $\Gamma=(G,\sigma)$ be a signed graph with at least one edge. Let $h\colon E(\Gamma)\to \mathbb{R}^+$ satisfying \eqref{localized version of extremal problems in signed graphs} and $f \colon \mathbb{R}^+ \to \mathbb{R}^+$ which is strictly decreasing. Then there exists a function set $\mathcal{H}=\left\{\alpha~|~\alpha=fh\right\}$ such that for any $\alpha \in \mathcal{H}$,
	\begin{equation}\label{upper bound for the index}
		\lambda_{1}(\Gamma) \le \sqrt{2\sum_{e\in E^{+}(\Gamma^{\prime})}\frac{1}{\alpha (e)}},
	\end{equation}
	where $\Gamma^{\prime} \sim \Gamma$ and $\lambda_{1}(\Gamma^{\prime})$ has a non-negative eigenvector. Equality holds if and only if $\Gamma$ is switching equivalent to, up to isolated vertices, a balanced complete $r$-partite graph for some $r$ $($with partition $V_1 \cup V_2 \cup \cdots \cup V_r$$)$, and there exists a vector $\boldsymbol{w} \in \mathbb{R}^n$ such that
	\begin{enumerate}
		\item[\rm (1)]  $A(\Gamma) = \pm \sum_{i=1}^r (\mathbf{1}_{V_i} \circ \boldsymbol{w}) \big( (\mathbf{1} - \mathbf{1}_{V_i}) \circ \boldsymbol{w} \big)^\mathrm{T}$; and
		\item[\rm (2)]  $\|\mathbf{1}_{V_i} \circ \boldsymbol{w}\|^2 = \|\boldsymbol{w}\|^2 - \sqrt{2|E(\Gamma)|\frac{r-1}{r}}$ for each $i \in [r]$.
	\end{enumerate}
\end{theorem}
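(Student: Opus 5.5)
The plan is to reduce to the unsigned weighted case of Theorem~\ref{Liu-Ning-AR-2025}, exploiting the freedom to pick $\alpha=fh$.

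\textbf{Step 1: switch and compare with the positive part.} Switching preserves the spectrum, so we may choose $\Gamma'\sim\Gamma$ such that $\lambda_1(\Gamma')=\lambda_1(\Gamma)$ has a unit eigenvector $\mathbf{x}\ge 0$. Set $\mathbf{x}=D\mathbf{y}$ for a $\pm1$ diagonal $D$ with $\mathbf{y}$ a unit eigenvector for $\lambda_1(\Gamma)$, and switch by $D$; this gives the nonnegative eigenvector. Then
\[
\lambda_1(\Gamma)=\mathbf{x}^{\mathrm T}A(\Gamma')\mathbf{x}
=2\sum_{ij\in E^+(\Gamma')}x_ix_j-2\sum_{ij\in E^-(\Gamma')}x_ix_j
\le \mathbf{x}^{\mathrm T}A(G^+)\mathbf{x}\le \lambda_1(G^+),
\]
where $G^+$ is the unsigned spanning subgraph with edge set $E^+(\Gamma')$.

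\textbf{Step 2: apply the unsigned bound.} Apply Theorem~\ref{Liu-Ning-AR-2025} to $G^+$ with unit weights $a(e)=1$. If $E^+(\Gamma')=\emptyset$ the bound is vacuous; in that case $\lambda_1=0$ follows since $\mathbf{x}\ge0$ gives $\lambda_1\le 0$, so an edgeless graph is forced. This yields
\[
\lambda_1(\Gamma)\le \sqrt{2\sum_{e\in E^+(\Gamma')}\frac{c(e)-1}{c(e)}},
\]
where $c(e)$ is the clique number of $e$ in $G^+$. Now define $\mathcal H$: take $F_i$ to be the all-positive complete graph $(K_{i+1},+)$, so that $h(e)=c(e)-1$ in $\Gamma'$. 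Positive cliques in $\Gamma'$ are exactly cliques of $G^+$ all of whose edges lie in $E^+(\Gamma')$; since $G^+$ has only positive edges, these coincide with cliques of $G^+$. Take $f(t)=\frac{t+1}{t}$, which is strictly decreasing on $\mathbb{R}^+$. Then $1/\alpha(e)=\frac{c(e)-1}{c(e)}$, giving \eqref{upper bound for the index}. More generally, $\mathcal H$ consists of all compositions $fh$ of this type with $1/\alpha\ge (c-1)/c$ pointwise.

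\textbf{Step 3: equality.} Equality forces every inequality in Steps 1 and 2 to be tight. From Step 2, $G^+$ is, up to isolated vertices, a complete $r$-partite graph meeting conditions (1) and (2) of Theorem~\ref{Liu-Ning-AR-2025}. From Step 1, $\sum_{E^-}x_ix_j=0$. We must then show there are no negative edges between vertices in the support of $\mathbf{x}$. I expect this to be the main obstacle. My approach is to use the eigen-equation: equality $\mathbf{x}^{\mathrm T}A(G^+)\mathbf{x}=\lambda_1(G^+)$ means $\mathbf{x}$ is a Perron vector of $G^+$, which is positive on the nonisolated part. Every negative edge inside that part then contributes strictly, so no such negative edges exist. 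Negative edges touching isolated vertices of $G^+$ would change the eigen-equation at those vertices, which have $x_v=0$. That forces $\sum_{u}a^\sigma_{vu}x_u=0$, impossible with only negative edges to positive entries, so these edges are excluded as well. Hence $\Gamma'$ is balanced complete $r$-partite up to isolated vertices. Conditions (1) and (2) transfer from Theorem~\ref{Liu-Ning-AR-2025}, using $\|A(\Gamma)\|_F=\sqrt{2|E(\Gamma)|}$, which gives the stated form of (2). The converse follows by direct verification, since switching preserves both sides.
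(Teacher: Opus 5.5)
Your inequality argument is fine on its own terms. Step~3, however, has a genuine gap, exactly at the point you flagged.

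At a vertex $v$ that is isolated in $G^+$, the eigen-equation only gives $\sum_{u\sim v}x_u=0$. That is, every neighbour of $v$ lies outside $\mathrm{supp}(\mathbf{x})$. This excludes negative edges from $v$ into the support. It says nothing about a negative edge between two vertices that are \emph{both} isolated in $G^+$: there $x_u=x_v=0$ and the equation holds trivially.

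Such edges do occur at equality. Take $\Gamma=\Gamma'=(C_4,+)\cup(K_3,-)$.
\begin{itemize}
\item Its spectrum is $2,1,1,0,0,-2,-2$, so $\lambda_1=2$ is simple with eigenvector $\frac12(1,1,1,1,0,0,0)^{\mathrm T}\ge 0$. Hence $\Gamma'=\Gamma$ is admissible.
\item $E^+(\Gamma')=E(C_4)$, with $c(e)=c_b(e)=2$ on these edges.
\item So the right-hand side of \eqref{upper bound for the index} with $1/\alpha=(c-1)/c$ equals $\sqrt{2\cdot 4\cdot \frac12}=2=\lambda_1(\Gamma)$.
\end{itemize}
Yet $\Gamma$ is unbalanced. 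Similarly, $(C_4,+)\cup(K_2,-)$ is a balanced graph attaining equality whose underlying graph is not complete bipartite up to isolated vertices.

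So the ``only if'' direction fails as stated, and Step~3 cannot be completed without an extra hypothesis. What your argument actually proves is this: $U=\mathrm{supp}(\mathbf{x})$ induces $(K_{n_1,\dots,n_r},+)$ as a full connected component of $\Gamma'$. Meanwhile $\Gamma'[V(\Gamma)\setminus U]$ can be any all-negative graph of index at most $\lambda_1(K_{n_1,\dots,n_r})$. If you assume $\Gamma$ has only one nontrivial component, then $V(\Gamma)\setminus U$ consists of isolated vertices and your proof closes. Conditions (1) and (2) then transfer via $A(\Gamma)=DA(\Gamma')D$ and $\boldsymbol{w}\mapsto D\boldsymbol{w}$.

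The paper's proof does not get around this either. It disposes of balanced $\Gamma$ by citing Theorem~\ref{Liu-Ning-AR-2025}, tacitly treating $\Gamma'$ as all-positive; this fails for $(C_4,+)\cup(K_2,-)$. It disposes of unbalanced $\Gamma$ by the strict part of Lemma~\ref{LanCor-S}, which the $K_3$ example violates, since $\lambda_1(\Gamma)=\lambda_1(\Gamma-E(K_3))=2$.

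For the inequality itself, your route differs from the paper's Cauchy--Schwarz plus Lemma~\ref{F(z)} argument and is cleaner. You compare with $\lambda_1(G^+)$, then use Theorem~\ref{Liu-Ning-AR-2025} as a black box, with $\mathcal H$ defined by $1/\alpha\ge (c-1)/c$ on $E^+(\Gamma')$.

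One more caution: your ``converse by direct verification'' only works for $\alpha$ with $1/\alpha(e)=(c(e)-1)/c(e)$ on $E^+(\Gamma')$. For other members of your $\mathcal H$ (e.g.\ $f(t)=1/t$, so $1/\alpha=c-1$), the inequality is strict for every graph with an edge.
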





For a graph $G=(V(G),E(G))$, a \emph{walk} with $r$ vertices is a sequence $(v_0, v_1, \dots, v_{r-1})$ satisfying $v_{i-1}v_i \in E(G)$ for $1 \leq i \leq r-1$, which has length $r-1$. Let $w_r(v)$ denote the number of walks with $r$ vertices starting at a vertex $v$ in $G$. In particular, $w_1(v)=1$ and $w_2(v)=d_{G}(v)$, which is the degree of vertex $v$ in $G$.

Let $\omega(G)$ denote the \emph{clique number} of a graph $G$, which is the order of the largest clique of $G$. Recently, Liu et al. in \cite{Liu-Sun-Wang-Wu-AR-2026} proved \cite[Conjecture~1.20]{Kan-Kumar-Pragada-AR-2025}, as follows.

\begin{theorem}\emph{(See \cite[Theorem~1.8, Theorem 4.1]{Liu-Sun-Wang-Wu-AR-2026})}\label{Liu-Sun-Wang-Wu-AR-2026}
	Let $G$ be a graph with at least one edge and $\omega(G)=\omega$, and let $r~(\ge 1)$ be an integer. Then
	\begin{equation}\label{walk and largest eigenvalue}
		\lambda_1^r(G) \le \sum_{v \in V(G)} w_r(v) \frac{c(v)-1}{c(v)}.
	\end{equation}
	Equality holds if and only if one of the following holds:
	\begin{enumerate}
		\item[\rm (i)] If $\omega>2$, then $G$ is a complete regular $\omega$-partite graph, up to isolated vertices.
		
		\item[\rm (ii)] If $\omega = 2$, then $G$ is a complete bipartite graph, up to isolated vertices; moreover, if $r$ is odd, then the nontrivial complete bipartite component is regular.
	\end{enumerate}
\end{theorem}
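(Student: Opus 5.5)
I would reduce the inequality to a \emph{vertex-localized Motzkin--Straus inequality} and then use a walk-counting argument in the spirit of Nikiforov's proof of $\lambda_1^r(G)\le (1-1/\omega)\,w_r(G)$. The auxiliary inequality I aim for is the following. For every non-negative vector $\mathbf{z}\in\mathbb{R}^n$,
\begin{equation*}
\mathbf{z}^{\mathrm T}A(G)\mathbf{z}\;\le\;\Big(\sum_{u\in V(G)} z_u\Big)\sum_{v\in V(G)} z_v\,\frac{c(v)-1}{c(v)} .
\end{equation*}
This is a localization of $\mathbf{z}^{\mathrm T}A\mathbf{z}\le (1-1/\omega)(\sum z_u)^2$. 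I would prove it by the classical Motzkin--Straus shifting argument, taking a maximizer of the ratio with minimal support. If two non-adjacent vertices are both in the support, mass can be moved from one to the other without decreasing the ratio. After this shifting, the support is a clique $K$, and each $v\in K$ has $c(v)\ge |K|$. A direct computation on $K$ then gives the bound, with the local weights $1-1/c(v)\ge 1-1/|K|$ appearing on the right-hand side. Summing the bound with $\mathbf{z}=\mathbf{1}_{\{u,v\}}$ over edges should also recover Theorem~\ref{Adak-Chandran-ar-2025}, which serves as a sanity check.

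For the spectral inequality, let $\mathbf{x}\ge 0$ be a Perron eigenvector of $A(G)$ for $\lambda=\lambda_1(G)$. I would write $\lambda^r=\lambda^{r-2}\cdot\lambda^2$ and express $\lambda^2$ as $\mathbf{x}^{\mathrm T}A\mathbf{x}$ divided by a suitable normalization, choosing the normalization so that the factors $w_r(v)=(A^{r-1}\mathbf{1})_v$ appear. Concretely, $\lambda^{r-1}\sum_v x_v=\sum_v w_r(v)x_v$, since $\mathbf{1}^{\mathrm T}A^{r-1}\mathbf{x}=\lambda^{r-1}\mathbf{1}^{\mathrm T}\mathbf{x}$. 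Normalizing so that $\max_v x_v$ or $\sum_v x_v$ is fixed, I apply the localized Motzkin--Straus bound to the vector $\mathbf{z}$ with $z_v=x_v\,w_{r-1}(v)$, or with $z_v=x_v$, together with a Cauchy--Schwarz or H\"older step that converts $\sum_v x_v(1-1/c(v))$-type expressions into $\sum_v w_r(v)(1-1/c(v))$. If a single shot does not close, I would instead induct on $r$. The cases $r=1$ (which reads $\lambda\le\sum_v(1-1/c(v))$, true since $\lambda\le$ the number of non-isolated vertices minus one) and $r=2$ (from the auxiliary inequality with $\mathbf{z}$ the Perron vector and Cauchy--Schwarz) serve as base cases. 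The step from $r$ to $r+2$ would use the identity $w_{r+2}(v)=\sum_{u}(A^2)_{vu}w_r(u)$.

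For the equality case, I would trace back equality in each step. Equality in the Motzkin--Straus shifting forces $c(v)=\omega$ on the support and forces the graph, up to isolated vertices, to be complete multipartite. Equality in the Cauchy--Schwarz step forces $\mathbf{x}$ to be proportional to the walk vector. For $\omega>2$ this yields regularity, hence a complete regular $\omega$-partite graph. For $\omega=2$, a complete bipartite graph $K_{a,b}$ satisfies $\lambda^{r}=(ab)^{r/2}$ and $\sum_v w_r(v)/2$ equals this for even $r$ automatically, while for odd $r$ the comparison is an AM--GM inequality forcing $a=b$. This reproduces the dichotomy (i)/(ii).

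The main obstacle is the middle step: choosing the test vector and normalization so that the localized weights $1-1/c(v)$ attach to $w_r(v)$ and not to some mixed quantity. This is exactly where the global proof uses only the single constant $1-1/\omega$, so no such bookkeeping is needed there. I would first try $z_v=x_v$ with $\sum_v x_v=1$ and bound $\lambda^{r}=\lambda^{r-1}\mathbf{1}^{\mathrm T}A\mathbf{x}$ by iterating. If the weights drift, I would fall back to the induction on $r$ in steps of two.
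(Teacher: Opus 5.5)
The paper gives no proof of this statement. It quotes it from Liu--Sun--Wang--Wu and uses it as a black box in the proof of Theorem~\ref{Localization of signed walk}, so your proposal has to stand on its own. As written it has a genuine gap, exactly at the step you call the ``main obstacle''.

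Write $p_v=1-1/c(v)$ and $\mathbf p=(p_v)_v$. Your auxiliary inequality $\mathbf z^{\mathrm T}A\mathbf z\le(\mathbf 1^{\mathrm T}\mathbf z)(\mathbf p^{\mathrm T}\mathbf z)$ is true, and the shifting argument works: for non-adjacent $i,j$, numerator and denominator are both affine along $\mathbf e_i-\mathbf e_j$, so the ratio is monotone there. It is $\mathbf z=\mathbf 1$ that recovers Theorem~\ref{Adak-Chandran-ar-2025}; summing over $\mathbf z=\mathbf 1_{\{u,v\}}$ only gives the trivial $m\le\sum_v d(v)p_v$.

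Nothing in the proposal turns this inequality into $\lambda_1^r\le\sum_v w_r(v)p_v$, and the routes you list fail.
\begin{itemize}
\item \emph{Even $r$.} Take $K_{a,b}$ with $a\ne b$, which attains equality. There $w_{r-1}$ is constant, so both of your test vectors are multiples of the Perron vector $\mathbf x$. For unit $\mathbf x$ the auxiliary inequality at $\mathbf x$ is strict: $\mathbf x^{\mathrm T}A\mathbf x=\sqrt{ab}<(\sqrt a+\sqrt b)^2/4=(\mathbf 1^{\mathrm T}\mathbf x)(\mathbf p^{\mathrm T}\mathbf x)$. So no valid chain through this step can end at a bound attained on $K_{a,b}$. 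Concretely, for $r=2$ and the star $K_{1,k^2}$ with $k\ge 2$, you get $\lambda_1(\mathbf 1^{\mathrm T}\mathbf x)(\mathbf p^{\mathrm T}\mathbf x)=k(k+1)^2/4>k^2=\sum_v d(v)p_v$. Hence your $r=2$ base case is not available either.
\item \emph{What does work for $r=2$.} Use the test vector $\mathbf y=\mathbf x\circ\mathbf x$, which balances the two parts of $K_{a,b}$, and apply Cauchy--Schwarz over ordered edges with weights $p_i$. This needs the \emph{differently} weighted Motzkin--Straus inequality of Lemma~\ref{Liu-Ning-AR-2025-lemma-1}, namely $\sum_{ij\in E}(1/p_i+1/p_j)y_iy_j\le(\sum_i y_i)^2$. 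With your inequality and constant weights you only reach $\lambda_1^2\le 2m\sum_v p_vx_v^2$.
\item \emph{Odd $r=2k+1$.} The choice $\mathbf z=\mathbf x$ leaves you needing $\lambda_1^{2k}(\mathbf 1^{\mathrm T}\mathbf x)(\mathbf p^{\mathrm T}\mathbf x)\le\mathbf 1^{\mathrm T}A^{2k}\mathbf p$. For constant $\mathbf p$ this is Nikiforov's step $(\mathbf 1^{\mathrm T}A^k\mathbf x)^2\le\|A^k\mathbf 1\|^2$. For non-constant $\mathbf p$ it compares a product of two different inner products, which Cauchy--Schwarz does not give. In the expansion $\sum_i\lambda_i^{2k}(\mathbf u_i^{\mathrm T}\mathbf 1)(\mathbf u_i^{\mathrm T}\mathbf p)$ the non-Perron terms can be negative.
\end{itemize}
Attaching the local weights to $w_r(v)$ is the whole content of the theorem, and the proposal leaves it open.

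The fallback induction and the base case have their own problems.
\begin{itemize}
\item \emph{Induction step.} The natural step from $r$ to $r+2$, namely $\lambda_1^{r+2}\le\lambda_1^2\sum_v p_vw_r(v)\le\sum_v p_vw_{r+2}(v)$, runs the wrong way. Even with constant weights, Nikiforov's inequality gives $w_{q+2}\le\lambda_1^2w_q$ for odd $q$. For $P_4$ with $r=1$ and $p_v\equiv\tfrac12$, you get $2\lambda_1^2\approx 5.24>5=\sum_v p_vw_3(v)$.
\item \emph{Base case $r=1$.} Your justification is invalid. If $n'$ is the number of non-isolated vertices, then $\sum_v p_v\le n'(1-1/\omega)\le n'-1$, so $\lambda_1\le n'-1$ proves nothing. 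For $K_{1,k}$ the target is $(k+1)/2$, not $k$.
\item \emph{A correct $r=1$ argument.} Show $\lambda_1^2\le\sum_v d(v)p_v\le(\sum_v p_v)^2$, with the second inequality from Lemma~\ref{Liu-Ning-AR-2025-lemma-1} at $\mathbf y=\mathbf p/\|\mathbf p\|_1$. This is the mechanism of Proposition~\ref{signed degree and localized clique} in the paper.
\end{itemize}

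Since the inequality chain for general $r$ is never established, your equality analysis is also unsupported. Note in particular that for even $r$ on $K_{a,b}$ one has $w_r(v)\propto x_v^2$, not $w_r(v)\propto x_v$. So ``$\mathbf x$ proportional to the walk vector'' would wrongly exclude unbalanced complete bipartite graphs.
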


A walk in a signed graph $\Gamma$ is \emph{positive} if it contains an even number of negative edges; otherwise, it is \emph{negative}. Let $w_r^+(v)$ (respectively, $w_r^-(v)$) denote the number of positive (respectively, negative) walks of length $r-1$ starting at a vertex $v$ in $\Gamma$. The \emph{$r$-satisfaction vertex index} of $v$, denoted by $\zeta_r(v)$, is defined as the maximum number of positive walks of length $r-1$  starting at $v$ over all signed graphs switching equivalent to $\Gamma$. That is,
$$
\zeta_r(v)=\max_{\Gamma^{\prime} \sim \Gamma} w_r^+(v).
$$
In particular, $\zeta_1(v)=1$ and $\zeta_2(v)= d_{\Gamma}(v)$.

Let $\omega_{b}(\Gamma)$ denote the \emph{balanced clique number} of a signed graph $\Gamma$, which is the order of the largest balanced clique of $\Gamma$.

The third main contribution of this paper is to give a signed version of Theorem \ref{Liu-Sun-Wang-Wu-AR-2026}, as follows.
\begin{theorem}\label{Localization of signed walk}
	Let $\Gamma=(G,\sigma)$ be a signed graph with at least one edge and $\omega_b(\Gamma)=\omega_b$, and let $r~(\ge 1)$ be an integer. Then
	\begin{equation}\label{signed walk and largest eigenvalue}
		\lambda_1^r(\Gamma) \le \sum_{v \in V(\Gamma)}\zeta_r(v) \frac{c_b(v)-1}{c_b(v)}.
	\end{equation}
	Equality holds if and only if one of the following holds:
	\begin{enumerate}
		\item[\rm (i)] If $\omega_b>2$, then $\Gamma$ is switching equivalent to a balanced complete regular $\omega_b$-partite graph, up to isolated vertices.
		
		\item[\rm (ii)] If $\omega_b= 2$, then $\Gamma$ is switching equivalent to a balanced complete bipartite graph, up to isolated vertices; moreover, if $r$ is odd, then the nontrivial balanced complete bipartite component is regular.
	\end{enumerate}
\end{theorem}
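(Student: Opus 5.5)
The plan is to reduce the statement to the unsigned Theorem \ref{Liu-Sun-Wang-Wu-AR-2026} by passing to a well-chosen switching of $\Gamma$ and then to its subgraph of positive edges. Let $\boldsymbol{x}$ be a unit eigenvector of $A(\Gamma)$ for $\lambda_1=\lambda_1(\Gamma)$. Let $U$ be the set of vertices with $x_v<0$ and $D$ the diagonal $\pm1$ matrix that is $-1$ exactly on $U$. Switching $\Gamma$ at $U$ gives $\Gamma'\sim\Gamma$ with $A(\Gamma')=DA(\Gamma)D$. This matrix has the same spectrum and the nonnegative eigenvector $\boldsymbol{y}=D\boldsymbol{x}$ for $\lambda_1$. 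Let $G^+$ be the unsigned spanning subgraph of $\Gamma'$ formed by its positive edges. Since $\boldsymbol{y}\ge 0$,
\[
\lambda_1(\Gamma)=\boldsymbol{y}^{\mathrm T}A(\Gamma')\boldsymbol{y}
=\boldsymbol{y}^{\mathrm T}A(G^+)\boldsymbol{y}-2\sum_{uv\in E^-(\Gamma')}y_uy_v
\le \boldsymbol{y}^{\mathrm T}A(G^+)\boldsymbol{y}\le\lambda_1(G^+).
\]
Moreover $\lambda_1(\Gamma)>0$, because $A(\Gamma)$ is a nonzero matrix with zero trace. Hence $G^+$ has at least one edge, and Theorem \ref{Liu-Sun-Wang-Wu-AR-2026} applies to $G^+$.

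Applying Theorem \ref{Liu-Sun-Wang-Wu-AR-2026} gives
\[
\lambda_1^r(\Gamma)\le\lambda_1^r(G^+)\le\sum_{v}w_r^{G^+}(v)\,\frac{c_{G^+}(v)-1}{c_{G^+}(v)}.
\]
Two comparisons then finish the inequality.
\begin{enumerate}
\item[(a)] A clique of $G^+$ is an all-positive, hence balanced, clique of $\Gamma'$. Balance is invariant under switching, so it is a balanced clique of $\Gamma$. Thus $c_{G^+}(v)\le c_b(v)$, and $t\mapsto (t-1)/t$ is increasing.
\item[(b)] Every walk in $G^+$ is a positive walk of $\Gamma'$. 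Thus $w_r^{G^+}(v)\le w_r^+(v)$ computed in $\Gamma'$, and this is at most $\zeta_r(v)$.
\end{enumerate}
All terms are nonnegative, so \eqref{signed walk and largest eigenvalue} follows. The case $r=1$ is included, since then both walk counts equal $1$.

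For the equality case I would force equality at every step.
\begin{enumerate}
\item[(i)] Let $v$ be non-isolated in $\Gamma$. Then $c_b(v)\ge2$ and $\zeta_r(v)\ge1$, so $v$ contributes a positive term on the right. If $v$ were isolated in $G^+$, it would contribute $0$ on the $G^+$ side, making the inequality strict. Hence the non-isolated vertices of $\Gamma$ and of $G^+$ coincide, and $c_{G^+}(v)=c_b(v)$ for them.
\item[(ii)] Equality $\lambda_1(\Gamma)=\lambda_1(G^+)$ means $\boldsymbol{y}$ is a Perron vector of $G^+$ and $y_uy_v=0$ on every negative edge of $\Gamma'$.
\item[(iii)] By the equality case of Theorem \ref{Liu-Sun-Wang-Wu-AR-2026}, $G^+$ is, up to isolated vertices, a complete regular $\omega$-partite graph (with the bipartite and odd-$r$ refinement when $\omega=2$). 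This graph is connected, so its Perron vector is positive on all non-isolated vertices. Together with (i) and (ii), $\Gamma'$ then has no negative edges at all.
\item[(iv)] Consequently $\Gamma'=(G^+,+)$ is a balanced complete multipartite graph. Its balanced cliques are exactly its cliques, so $\omega=\omega_b$. This yields cases (i) and (ii) of the theorem.
\item[(v)] For the converse, if $\Gamma\sim(K,+)$ with $K$ as described, then $\zeta_r(v)=w_r^{K}(v)$, $c_b(v)=c_K(v)$ and $\lambda_1(\Gamma)=\lambda_1(K)$. Equality then follows directly from Theorem \ref{Liu-Sun-Wang-Wu-AR-2026}.
\end{enumerate}

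The main obstacle I expect is making (iii) airtight. Step (ii) kills only the negative edges inside the support of $\boldsymbol{y}$. One must also rule out negative edges touching vertices on which $\boldsymbol{y}$ vanishes, and rule out $G^+$ having further nontrivial components beyond the one described. The first thing I would try is to use (i) together with the fact that the equality case of Theorem \ref{Liu-Sun-Wang-Wu-AR-2026} allows only one nontrivial component, so every non-isolated vertex lies in the support of $\boldsymbol{y}$. If that is not enough, I would fall back on the strictness of $w_r^{G^+}(v)\le\zeta_r(v)$ whenever $\Gamma'$ has a negative edge incident to $v$. This strictness is clear for $r=2$, since $\zeta_2(v)=d_\Gamma(v)>d_{G^+}(v)$. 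For larger $r$ it needs a short argument, for instance switching so that such a walk becomes positive.
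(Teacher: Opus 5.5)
Your proof of the inequality matches the paper's. You switch to $\Gamma'$ with a nonnegative eigenvector for $\lambda_1$ and pass to its all-positive part $G^+$ (the paper's $\Gamma'-S$). You then apply Theorem~\ref{Liu-Sun-Wang-Wu-AR-2026} and compare walk counts and clique orders. The only differences are that you prove $\lambda_1(\Gamma)\le\lambda_1(G^+)$ directly from the Rayleigh quotient instead of citing Lemma~\ref{LanCor-S}, and you check that $\lambda_1(\Gamma)>0$ forces $G^+$ to have an edge, a detail the paper omits.

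The equality case is where you take a genuinely different route, and it is correct. The paper reduces to showing that no unbalanced $\Gamma$ is extremal, and derives a contradiction from the strict clause of Lemma~\ref{LanCor-S}. You instead force equality at every link of the chain:
\begin{enumerate}
\item[(1)] a vertex non-isolated in $\Gamma$ but isolated in $G^+$ would pair a positive right-hand term with a zero term, so there is none;
\item[(2)] the extremal $G^+$ has a single nontrivial component, so $\boldsymbol{y}$ is its Perron vector and is positive on every non-isolated vertex;
\item[(3)] since $y_uy_v=0$ on every negative edge, no negative edges remain.
\end{enumerate}
This already settles the obstacle you raise at the end, so your first idea is enough. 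The fallback via strictness of $w_r^{G^+}(v)\le\zeta_r(v)$ is unnecessary, and it would fail for $r=1$, where both sides equal $1$.

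What your route buys is independence from connectivity. As stated, without a connectivity hypothesis, the strict clause of Lemma~\ref{LanCor-S} is false. Take $(K_5,+)$ together with a disjoint unbalanced triangle: it has a nonnegative eigenvector for $\lambda_1=4$, and deleting the negative edge leaves $\lambda_1=4$. So the paper's contradiction step tacitly needs $\Gamma$ connected. Your isolated-vertex count handles extra components directly.
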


We will prove Theorem \ref{signed graph version of Adak-Chandran theorem} in Section \ref{signed graph version of Adak-Chandran theorem-section},  Theorem \ref{Localization and the Largest Eigenvalue} in Section \ref{Prove Theorem 1.4, 1.5 and 1.6} and Theorem \ref{Localization of signed walk} in Section \ref{Localization of signed walk-section}, respectively.

\section{Proof of Theorem \ref{signed graph version of Adak-Chandran theorem}}\label{signed graph version of Adak-Chandran theorem-section}
Let $\Gamma=(G,\sigma)$ be a signed graph with underlying graph $G=(V(G),E(G))$ and $\emptyset\not=S\subseteq  E(G)$. Denote by $\Gamma-S$ the signed graph obtained from $\Gamma$ by deleting all signed edges of $\Gamma$ whose underlying edges are in $S$.

\medskip

\begin{Tproof}\textbf{~of~Theorem~\ref{signed graph version of Adak-Chandran theorem}.}
	For $\Gamma$, there exists a signed graph $\Gamma^{\prime}$ such that $\Gamma^{\prime} \sim \Gamma$ and $|E^-(\Gamma^{\prime})|=\epsilon(\Gamma)$. Let $S$ be the set of underlying edges corresponding to $E^{-}(\Gamma')$. Hence, all edges of $\Gamma'-S$ are positive. Let $c^{\prime}_b(v)$ denote the order of the largest balanced clique containing $v$ in $\Gamma^{\prime}-S$. By Theorem \ref{Adak-Chandran-ar-2025} and $c^{\prime}_b(v)\le c_b(v)$, we have
	\[
	m-\epsilon(\Gamma)\le \frac{n}{2}\sum_{v \in V(\Gamma^{\prime}-S)} \frac{c^{\prime}_b(v)-1}{c^{\prime}_b(v)}\le \frac{n}{2}\sum_{v\in V(\Gamma^{\prime})}\frac{c_b(v)-1}{c_b(v)},
	\]
	yielding \eqref{signed graph version of Adak-Chandran theorem-1}.
	
	In the following, we characterize the extremal graphs. Assume that equality holds in  \eqref{signed graph version of Adak-Chandran theorem-1}. From the proof above, we have
	\[
	m-\epsilon(\Gamma)=\frac{n}{2}\sum_{v \in V(\Gamma^{\prime}-S)} \frac{c^{\prime}_b(v)-1}{c^{\prime}_b(v)}.
	\]
	By Theorem \ref{Adak-Chandran-ar-2025}, $\Gamma^{\prime}-S$ is a complete regular multi-partite signed graph with all edges positive. Hence, $\Gamma^{\prime}$ is a complete regular multi-partite signed graph with all edges positive by adding $\epsilon(\Gamma)$ negative edges.
	
	This completes the proof. \qed
\end{Tproof}




\begin{remark}\label{Kan-Pragada-LAA-2023-m-extremal graphs}
	{\em
		Let $\Gamma=(G, \sigma)$ be a signed graph with $n$ vertices and $m$ edges. 	In 2023, Kannan and Pragada proved \cite[Theorem~3.7]{Kan-Pragada-LAA-2023} that
		\begin{equation}\label{Kan-Pragada-LAA-2023-m-equation}
			m \leq \epsilon(\Gamma)+\frac{n^2}{2}\left(1-\frac{1}{\omega_b(\Gamma)}\right).
		\end{equation}
		Note that $c_b(v)\le \omega_b(\Gamma)$ for every $v\in V(\Gamma)$. Then \eqref{signed graph version of Adak-Chandran theorem-1} implies \eqref{Kan-Pragada-LAA-2023-m-equation} immediately. Hence, Theorem \ref{signed graph version of Adak-Chandran theorem} generalizes \cite[Theorem~3.7]{Kan-Pragada-LAA-2023}.
		
		See the following example: Let $\Sigma'$ be a signed graph whose underlying graph is $K_4$ with vertex set $\left\{v_1, v_2, v_3, v_4\right\}$, and only the edge $v_1v_2$ is negative. The signed graph $\Gamma$ is obtained by adding a vertex $v_5$ to $\Sigma'$ and joining $v_5$ to $v_4$ with a positive edge. It is easy to check that $\lambda_{1}(\Gamma)$ has a non-negative eigenvector. By a simple computation,
		the bound of Theorem \ref{signed graph version of Adak-Chandran theorem} is
		$$
		\epsilon(\Gamma)+\frac{n}{2}\sum_{v \in V(\Gamma)} \frac{c_b(v)-1}{c_b(v)}= \frac{107}{12},
		$$
		and the bound of \cite[Theorem~3.7]{Kan-Pragada-LAA-2023} is
		$$
		\epsilon(\Gamma)+\frac{n^2}{2}\left(1-\frac{1}{\omega_b(\Gamma)}\right)= \frac{28}{3}.
		$$
		Clearly, $\frac{107}{12}$ is better than $\frac{28}{3}$.

		Furthermore, if equality holds in \eqref{Kan-Pragada-LAA-2023-m-equation}, then equality holds in \eqref{signed graph version of Adak-Chandran theorem-1} and $c_b(v)= \omega_b(\Gamma)$ for every $v\in V(\Gamma)$, that is, (i) $\Gamma$ is switching equivalent to a complete regular multi-partite signed graph with all edges positive by adding $\epsilon(\Gamma)$ negative edges, up to isolated vertices, and (ii) $c_b(v)= \omega_b(\Gamma)$ for every $v\in V(\Gamma)$. Note that any graph satisfying (i) also satisfies (ii). Thus, if equality holds in \eqref{signed graph version of Adak-Chandran theorem-1}, then equality holds in \eqref{Kan-Pragada-LAA-2023-m-equation}. This implies that extremal graphs attaining \eqref{signed graph version of Adak-Chandran theorem-1} and \eqref{Kan-Pragada-LAA-2023-m-equation} are the same.
	}	
\end{remark}

\section{Proof~of~Theorem~\ref{Localization and the Largest Eigenvalue}}\label{Prove Theorem 1.4, 1.5 and 1.6}

\begin{lemma}\emph{(See \cite[Lemma~2.5]{Sun-Liu-Lan-LAA-2022})}\label{Sun-Liu-Lan-LAA-2022}
	Let $\Gamma$ be a signed graph with $n$ vertices. Then there exists a signed graph $\Gamma^{\prime}$ switching equivalent to $\Gamma$ such that $\lambda_{1}(\Gamma^{\prime})$ has a non-negative eigenvector.
\end{lemma}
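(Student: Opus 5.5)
The approach I would take is to build the required switching explicitly, in the spirit of the Perron--Frobenius argument for unsigned graphs.

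Let $\lambda_1=\lambda_1(\Gamma)$ and pick a unit eigenvector $\mathbf{x}=(x_1,\dots,x_n)^{\mathrm T}$ of $A(\Gamma)$ for $\lambda_1$. Put $U=\{v_i : x_i<0\}$. If $U=\emptyset$ there is nothing to prove, so take $\Gamma'=\Gamma$. Otherwise, switch $\Gamma$ at $U$. In matrix form this is the diagonal signature matrix $D=\mathrm{diag}(d_1,\dots,d_n)$ with $d_i=-1$ for $v_i\in U$ and $d_i=1$ otherwise. The switched graph $\Gamma'$ then has adjacency matrix $A(\Gamma')=DA(\Gamma)D$.

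The key steps, in order, are as follows.

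\begin{enumerate}
\item Since $D^2=I$, the matrix $A(\Gamma')$ is orthogonally similar to $A(\Gamma)$. Hence $\Gamma'$ has the same spectrum, and in particular $\lambda_1(\Gamma')=\lambda_1$.
\item From $A(\Gamma)\mathbf{x}=\lambda_1\mathbf{x}$ we get
\[
A(\Gamma')(D\mathbf{x})=DA(\Gamma)DD\mathbf{x}=DA(\Gamma)\mathbf{x}=\lambda_1 D\mathbf{x}.
\]
So $D\mathbf{x}$ is an eigenvector of $A(\Gamma')$ for $\lambda_1(\Gamma')$.
\item By the choice of $U$, the entries of $D\mathbf{x}$ are $|x_1|,\dots,|x_n|$, which are all non-negative. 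This gives the required non-negative eigenvector.
\item Finally, switching a signed graph at a vertex subset has exactly the effect of conjugating its adjacency matrix by the corresponding signature matrix. Therefore $\Gamma'\sim\Gamma$, which completes the argument.
\end{enumerate}

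No step here is really an obstacle. The only point needing care is the identification of switching with conjugation by $D$. For an edge $v_iv_j$, the entry $a^{\sigma}_{ij}$ gets multiplied by $d_id_j$. This factor equals $-1$ exactly when one endpoint lies in $U$ and the other does not, which is the definition of switching at $U$.

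Note also that the argument never uses the Perron--Frobenius theorem, so it is valid even when $\Gamma$ is disconnected.
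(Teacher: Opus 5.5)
Your argument is correct and is essentially the standard proof of the cited lemma; the paper itself gives no proof and only refers to Sun--Liu--Lan. You switch at the set of negative coordinates of a $\lambda_1$-eigenvector $\mathbf{x}$, which replaces $A(\Gamma)$ by the signature-similar matrix $DA(\Gamma)D$, and $D\mathbf{x}=(|x_1|,\dots,|x_n|)^{\mathrm T}$ is then an eigenvector of that matrix. The paper uses the same device in its proof of Lemma~\ref{induces a balanced clique}, where $\Gamma$ is switched at $\{i : x_i<0\}$ to pass from $\mathbf{x}$ to $(|x_1|,\dots,|x_n|)^{\mathrm T}$.
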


\begin{lemma}\emph{(See \cite[Lemma~2.1]{Hou-Tang-Wang-AMC-2019})}
	\label{Hou-Tang-Wang-AMC-2019}
	Let $\Gamma_{1}=(G,\sigma_{1})$ and $\Gamma_{2}=(G,\sigma_{2})$ be two signed graphs on the same underlying graph. The following are equivalent:
	
	$\mathrm{(1)}$~$\Gamma_{1}$ and $\Gamma_{2}$ are switching equivalent.
	
	$\mathrm{(2)}$~$A(\Gamma_{1})$ and $A(\Gamma_{2})$ are similar.
\end{lemma}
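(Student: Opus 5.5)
The statement immediately above is Lemma \ref{Hou-Tang-Wang-AMC-2019}, the equivalence between switching equivalence and similarity of adjacency matrices for two signed graphs on the same underlying graph $G$. The plan is to prove $(1)\Rightarrow(2)$ by writing switching as a signature-matrix conjugation, and $(2)\Rightarrow(1)$ by comparing closed-walk counts coming from traces of powers.

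For $(1)\Rightarrow(2)$: switching $\Gamma_1$ at a vertex subset $U$ corresponds to conjugating by the diagonal matrix $D_U$ whose diagonal entries are $-1$ on $U$ and $+1$ elsewhere. Indeed, $(D_UA(\Gamma_1)D_U)_{ij}=d_id_ja^{\sigma_1}_{ij}$, which flips the sign exactly when precisely one of $v_i,v_j$ lies in $U$. Since $D_U=D_U^{-1}$, each switching is a similarity. A finite sequence of switchings is then a similarity by $D=D_{U_1}\cdots D_{U_k}$, so $A(\Gamma_1)$ and $A(\Gamma_2)$ are similar.

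For $(2)\Rightarrow(1)$: similar matrices have equal traces of all powers, so $\operatorname{tr}A(\Gamma_1)^k=\operatorname{tr}A(\Gamma_2)^k$ for every $k\ge 1$. Each of these traces is a signed count of closed walks in $G$, where a walk is weighted by the product of the signs of its edges. The classical fact (Zaslavsky) is that two signatures on $G$ are switching equivalent if and only if they assign the same sign to every cycle. So it suffices to show that equal signed closed-walk counts force equal cycle signs.

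I would argue this by induction on cycle length, using $\sigma_1\sigma_2$ as a signature that measures disagreement. A closed walk of length $k$ traces a subgraph whose cycle space is generated by cycles, and its sign is the product of the signs of the cycles it traverses an odd number of times, with tree-like backtracking contributing $+1$. The hard case is when $G$ contains several distinct cycles of the same length: the trace identity only equates sums over all of them, so individual cycle signs may not be determined. Indeed, the implication $(2)\Rightarrow(1)$ appears false in general, since there exist cospectral but non-switching-equivalent signatures on a common underlying graph. Presumably the cited source \cite{Hou-Tang-Wang-AMC-2019} intends similarity via a signature matrix, that is, $A(\Gamma_2)=DA(\Gamma_1)D$ with $D$ diagonal with entries $\pm1$.

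I would therefore read (2) in that restricted sense. Then $(2)\Rightarrow(1)$ becomes straightforward: let $U=\{v_i:d_i=-1\}$. Switching $\Gamma_1$ at $U$ produces exactly the sign pattern $d_id_ja^{\sigma_1}_{ij}=a^{\sigma_2}_{ij}$, that is, $\Gamma_2$. If $U$ is empty, then $D=\pm I$ and $\Gamma_1=\Gamma_2$ already. The only delicate step is fixing this interpretation of "similar"; with plain similarity, the reverse implication would need the cycle-sign argument above, and that is the step I expect to fail.
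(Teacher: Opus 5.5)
The paper gives no proof of this lemma; it is only cited, and the paper uses only the direction (1)$\Rightarrow$(2), to get $\lambda_1(\Gamma)=\lambda_1(\Gamma')$ for a switching $\Gamma'$ with a non-negative eigenvector in the proof of Theorem~\ref{Localization and the Largest Eigenvalue}. Your conjugation argument $A\mapsto D_UAD_U$ proves that direction completely. Your proof of (2)$\Rightarrow$(1) is also correct under the reading $A(\Gamma_2)=DA(\Gamma_1)D$ with $D$ a diagonal $\pm1$ matrix.

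Your suspicion about plain similarity is correct, and you should state it as a fact rather than writing ``appears false''. Here is a counterexample. Take $G=2K_3$ on $\{1,\dots,6\}$ with triangles $\{1,2,3\}$ and $\{4,5,6\}$. Let $\sigma_1$ make only the edge $12$ negative, and let $\sigma_2$ make only the edge $45$ negative. The permutation $i\leftrightarrow i+3$ ($i=1,2,3$) carries $A(\Gamma_1)$ to $A(\Gamma_2)$, so the two matrices are similar; both have spectrum $\{2,1,1,-1,-1,-2\}$. Yet the triangle $123$ is negative in $\Gamma_1$ and positive in $\Gamma_2$. Switching preserves the sign of every cycle, because a cycle meets any cut in an even number of edges, so $\Gamma_1$ and $\Gamma_2$ are not switching equivalent. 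Gluing the two triangles at a vertex gives a connected example as well.

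These two matrices have equal traces of all powers, so your closed-walk induction paragraph cannot be completed. Replace it with this counterexample. The lemma holds only with ``similar'' read as ``signature similar'', which is the reading under which the paper should cite it.
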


\begin{lemma}\emph{(See \cite[Lemma~2.4]{Xie-Liu-ar-2025})}\label{LanCor-S}
	If $\Gamma$ is a signed graph and $\lambda_{1}(\Gamma)$ has a non-negative eigenvector, then for any set $S\subseteq E^-(\Gamma)$, we have $\lambda_{1}(\Gamma)\le \lambda_{1}(\Gamma-S)$. Furthermore, if $\Gamma$ is unbalanced and $\Gamma-S$ is balanced, then $\lambda_{1}(\Gamma)<\lambda_{1}(\Gamma-S)$.
\end{lemma}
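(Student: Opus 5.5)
The plan is a Rayleigh-quotient comparison, with a separate analysis of the equality case for the strict part. Let $\boldsymbol{x}$ be a unit non-negative eigenvector of $A(\Gamma)$ for $\lambda_1(\Gamma)$. Since every edge of $S$ is negative,
\[
A(\Gamma)=A(\Gamma-S)-\sum_{uv\in S}\big(\mathbf{e}_u\mathbf{e}_v^{\mathrm T}+\mathbf{e}_v\mathbf{e}_u^{\mathrm T}\big).
\]
It follows that
\[
\lambda_1(\Gamma)=\boldsymbol{x}^{\mathrm T}A(\Gamma)\boldsymbol{x}=\boldsymbol{x}^{\mathrm T}A(\Gamma-S)\boldsymbol{x}-2\sum_{uv\in S}x_ux_v\le \boldsymbol{x}^{\mathrm T}A(\Gamma-S)\boldsymbol{x}\le\lambda_1(\Gamma-S).
\]
The first inequality uses $x_u,x_v\ge 0$, and the second is the Rayleigh principle. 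This proves the weak inequality.

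For strictness, I argue by contradiction: suppose $\lambda_1(\Gamma)=\lambda_1(\Gamma-S)$. Both inequalities above are then equalities, which gives two facts.
\begin{enumerate}
\item[(a)] $x_ux_v=0$ for every $uv\in S$.
\item[(b)] $\boldsymbol{x}$ is also a $\lambda_1$-eigenvector of $A(\Gamma-S)$.
\end{enumerate}
Subtracting the two eigen-equations coordinatewise yields $\sum_{v:\,uv\in S}x_v=0$ for every $u$. Since $\boldsymbol{x}\ge 0$, this forces $x_v=0$ at every endpoint of an edge of $S$.

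Let $T$ be the support of $\boldsymbol{x}$. Then $\Gamma[T]$ contains no edge of $S$, so $\Gamma[T]$ is a signed subgraph of the balanced graph $\Gamma-S$ and is itself balanced. Moreover $\boldsymbol{x}_T$ is a positive $\lambda_1$-eigenvector of $A(\Gamma[T])$. On each component of $\Gamma[T]$, switch to the all-positive graph by a diagonal $\pm1$ matrix $D$. The Perron eigenvalue of a connected graph is simple, so $D\boldsymbol{x}_T$ and $\boldsymbol{x}_T$ (both eigenvectors for the top eigenvalue) must be proportional. This forces $D=\pm I$ on each component, so every edge inside $T$ is positive.

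Next I use the eigen-equation of $A(\Gamma)$ at a vertex $v\notin T$ adjacent to $T$: $\sum_{w\in T}\sigma(vw)x_w=0$. I then compare with the Perron vector of the underlying graph $G$ to show that $\lambda_1(\Gamma)=\boldsymbol{x}^{\mathrm T}A(\Gamma)\boldsymbol{x}<\lambda_1(G)$ is incompatible with the balanced structure found above. The key input is that for connected $\Gamma$, $\lambda_1(\Gamma)=\lambda_1(G)$ holds iff $\Gamma$ is balanced, together with strict Perron--Frobenius monotonicity.

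The main obstacle is this last step, and it shows that an additional hypothesis, namely connectivity of $\Gamma$, is genuinely needed. Consider $\Gamma$ equal to the disjoint union of $(K_5,+)$ and a triangle with one negative edge. Then $\lambda_1(\Gamma)=4$ has a non-negative eigenvector supported on $K_5$, and deleting the negative edge leaves $\lambda_1=4$. So the strict statement must be read for connected $\Gamma$, or at least with $\boldsymbol{x}$ positive. Under positivity of $\boldsymbol{x}$ the argument closes immediately: fact (a) forces $S=\emptyset$, contradicting that $\Gamma$ is unbalanced while $\Gamma-S$ is balanced. In the connected case, the work is to deduce $T=V(\Gamma)$ from the boundary equations above. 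I would attempt this first, and fall back to adding the positivity/connectivity hypothesis explicitly if that deduction fails.
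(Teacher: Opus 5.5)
Your weak inequality is the standard Rayleigh-quotient argument and is correct. Your counterexample is also correct. The unbalanced triangle has spectrum $1,1,-2$, so $\lambda_1(\Gamma)=4$ with the non-negative eigenvector $\mathbf{1}_{V(K_5)}$. Meanwhile $\Gamma-S=(K_5,+)\cup(P_3,+)$ is balanced with index $\max\{4,\sqrt2\}=4$. So the strict part of the lemma is false as stated. The paper gives no proof of this lemma; it only cites it. It then uses the strict part in the equality analyses of Theorems~\ref{Localization and the Largest Eigenvalue} and~\ref{Localization of signed walk} for graphs not assumed connected, so those steps need an extra componentwise argument.

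What remains is the connected version you propose as the repair, and the route you sketch would not close it. The local facts you extract are all consistent with $T\neq V(\Gamma)$ in a connected graph. Take two copies of $(K_5,+)$. Add a vertex $v$ joined positively to one vertex of the first copy and negatively to one vertex of the second. Add a triangle $vuu'$ whose only negative edge is $uv$, and let $S=\{uv\}$. Then $\Gamma$ is connected and unbalanced, and $\Gamma-S$ is balanced. Let $\boldsymbol{x}$ equal $1$ on both copies and $0$ on $v,u,u'$. It satisfies $A(\Gamma)\boldsymbol{x}=A(\Gamma-S)\boldsymbol{x}=4\boldsymbol{x}$, vanishes at the endpoints of $S$, satisfies your boundary equation at $v$, and has only positive edges inside its support. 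The only thing that fails is that $4$ is the largest eigenvalue: the connected balanced block formed by the two copies and $v$ has index larger than $4$. Comparing with $\lambda_1(G)$ does not detect this either. It gives only $\lambda_1(\Gamma)<\lambda_1(G)$, which is compatible with $\lambda_1(\Gamma)=\lambda_1(\Gamma-S)=\lambda_1(G-S)<\lambda_1(G)$ when $G$ is connected and $S\neq\emptyset$.

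The missing idea is to exploit the maximality in your fact (b) by applying Perron--Frobenius to the balanced graph $\Gamma-S$, not to $\Gamma[T]$ or $G$.
\begin{enumerate}
\item[(i)] Fix a component $C$ of $G-S$ and switch $(\Gamma-S)[C]$ to all-positive by a diagonal $\pm1$ matrix $D_C$. Then $\boldsymbol{x}|_C$ is either $0$ or a $\lambda_1(\Gamma-S)$-eigenvector of this connected block. In the latter case $D_C\boldsymbol{x}|_C$ is a multiple of the block's positive Perron vector, so $\boldsymbol{x}$ has no zero entry on $C$.
\item[(ii)] Hence $T$ is a union of components of $G-S$.
\item[(iii)] Since $\boldsymbol{x}$ vanishes at every endpoint of an edge of $S$, no edge of $S$ meets $T$. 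So $T$ is a union of components of $G$.
\item[(iv)] If $\Gamma$ is connected, this forces $T=V(\Gamma)$, hence $S=\emptyset$, contradicting that $\Gamma$ is unbalanced while $\Gamma-S$ is balanced.
\end{enumerate}
The same argument shows exactly when strictness fails: $\lambda_1(\Gamma)=\lambda_1(\Gamma-S)$ if and only if some component of $\Gamma$ containing no edge of $S$ has index $\lambda_1(\Gamma-S)$. That is precisely what your disjoint-union example exhibits.
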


Let $\Gamma$ be a signed graph with $V(\Gamma)=\{1,2, \ldots,n\}$. Define
$$
S^{\pm}_{n}=\left \{(x_{1},x_{2},\dots,x_{n})^T\in \mathbb{R}^{n}\colon \sum_{i=1}^{n} |x_{i}|=1 \right\}.
$$
By \cite[Theorem 5]{Wang-Yan-Qian-LAA-2021}, we have
\begin{equation}\label{equation-clique}
	\max\left \{\mathbf{x}^{T}A(\Gamma)\mathbf{x}\colon \mathbf{x}\in S^{\pm}_{n} \right \}=1-\frac{1}{\omega_{b}(\Gamma)}.
\end{equation}

Let $h\colon E(\Gamma)\to \mathbb{R}^+$ be as defined in \eqref{localized version of extremal problems in signed graphs} and $f \colon \mathbb{R}^+ \to \mathbb{R}^+$. For any $\alpha =fh$, let $W_{\Gamma}^{\alpha}$ be an $n\times n$ matrix whose $(i,j)$-entry is $\sigma(ij)\alpha(e)$ if $e=ij\in E(\Gamma)$, and 0 otherwise. For $ \mathbf{x}=(x_{1},x_{2},\dots,x_{n})^T\in S^{\pm}_{n}$, define
\begin{align}\label{SnXBiaoDaF1}
	F_{\Gamma}^{\alpha}(\mathbf{x})& :=\mathbf{x}^{T}W_{\Gamma}^{\alpha}\mathbf{x} =2\sum_{e=ij\in E(\Gamma)}\sigma(ij)\alpha(e)x_{i}x_{j}.
\end{align}

Let $\mathrm{supp}(\mathbf{x})$ denote the \emph{support} of a vector $\mathbf{x}$, which is the set consisting of all indices corresponding to nonzero entries in $\mathbf{x}$.

\begin{lemma}\label{induces a balanced clique}
	Let $\mathbf{x}\in S^{\pm}_{n}$ be a vector such that $F_{\Gamma}^{\alpha}(\mathbf{x})=\max \left \{F_{\Gamma}^{\alpha}(\mathbf{z})\colon \mathbf{z}\in S^{\pm}_{n} \right \}$ with minimal supporting set. Then $\mathrm{supp}(\mathbf{x})$ induces a balanced clique in $\Gamma$.
\end{lemma}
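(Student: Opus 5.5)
Since $\mathbf{x}$ is a maximizer of $F_{\Gamma}^{\alpha}$ over $S^{\pm}_n$ with minimal support, I will show that the support has two properties. First, every two vertices of $\mathrm{supp}(\mathbf{x})$ are adjacent. Second, every cycle inside the induced subgraph is positive. The plan follows the Motzkin--Straus perturbation argument, adapted to signs.

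\textbf{Nonnegative form.} Replace $\mathbf{x}$ by $\mathbf{y}$ with $y_i=|x_i|$, and replace $\Gamma$ on $\mathrm{supp}(\mathbf{x})$ by the switching at $U=\{i: x_i<0\}$. Since $\alpha=fh$ and $h$ is defined up to switching equivalence, $\alpha$ is switching invariant. The switching conjugates $W_\Gamma^\alpha$ by a diagonal $\pm1$ matrix, so the value of $F$ is unchanged. Hence we may assume $\mathbf{x}\ge 0$. We then have $F(\mathbf{x})=2\sum_{ij\in E}\sigma(ij)\alpha(ij)x_ix_j$, and maximality forces $\sigma(ij)x_ix_j\ge 0$ wherever it matters.

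\textbf{Step 1: all support edges are positive.} Suppose some edge $ij$ with $i,j\in\mathrm{supp}$ is negative, with $x_i,x_j>0$. Try flipping the sign of a single coordinate, or a set $T$ of coordinates. This changes the edge contributions across the cut $(T,\mathrm{supp}\setminus T)$ from $\sigma$ to $-\sigma$. Choose $T$ to maximize the signed cut weight $\sum\sigma\alpha x_ix_j$ over that cut. By maximality of $\mathbf{x}$, every cut weight $\sum_{i\in T,\,j\notin T}\sigma(ij)\alpha(ij)x_ix_j$ must be $\ge 0$. In other words, switching never helps.

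This is not quite balance yet. So I instead argue via the standard pairwise-shift. For $i,j\in\mathrm{supp}$, move mass $t$ from $x_j$ to $x_i$. The function $F$ is quadratic in $t$ with leading coefficient $-2\sigma(ij)\alpha(ij)$ if $ij\in E$, and $0$ otherwise. The first-order optimality (Lagrange) conditions give that $(W\mathbf{x})_i$ is constant on the support. If $ij\notin E$, or if $ij$ is a negative edge, the quadratic in $t$ is convex or linear. Moving to an endpoint then does not decrease $F$ and shrinks the support, contradicting minimality. Hence every pair in the support is joined by a positive edge.

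\textbf{Step 2: the clique is balanced.} With the signs normalized, the induced clique on the support is all positive, so it is balanced in the switched graph. Switching back, it is a balanced clique of $\Gamma$.

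\textbf{Main obstacle.} The main difficulty is the coordinate sign normalization when $x$ has negative entries. I must check that the transfer of mass keeps $\sum|x_i|=1$. I will do the shift inside the orthant, where $x_i$ and $x_j$ keep their signs and $|x_i|+|x_j|$ is fixed. There the effective sign of edge $ij$ is $\sigma(ij)\operatorname{sgn}(x_i)\operatorname{sgn}(x_j)$, and the same convexity argument applies to it. The result is that every support pair is adjacent with effective sign $+$. This says exactly that switching at $U=\{i:x_i<0\}$ makes the induced clique all positive, so the clique is balanced.
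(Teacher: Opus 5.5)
Your proof is correct and is essentially the paper's argument. You switch at $\{i: x_i<0\}$ so the maximizer becomes $|\mathbf{x}|$; this correspondence preserves support sizes, so minimality transfers, which the paper checks explicitly. Then, for a pair $i,j$ in the support that is non-adjacent or negatively joined, you shift mass along $\mathbf{e}_i-\mathbf{e}_j$, where the nonnegative quadratic coefficient $-2\sigma(ij)\alpha(ij)$ (or $0$) gives an endpoint that shrinks the support without decreasing $F$. The only difference is cosmetic: the paper picks the endpoint by assuming $\mathbf{y}^{T}W\mathbf{e}_i\ge \mathbf{y}^{T}W\mathbf{e}_j$ instead of using the Lagrange condition or convexity, and your abandoned cut-flipping paragraph and the claim that maximality forces $\sigma(ij)x_ix_j\ge 0$ can simply be dropped.
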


\begin{proof}
	Let $\mathbf{x}=(x_{1},x_{2},\dots,x_{n})^T\in S^{\pm}_{n}$ such that $\mathbf{x}$ has a minimal supporting set and $F_{\Gamma}^{\alpha}(\mathbf{x})=\max\left \{F_{\Gamma}^{\alpha}(\mathbf{z})\colon\mathbf{z}\in S^{\pm}_{n} \right\}$. Define $S=\left \{i\colon x_{i}<0 \right\}$ and let $\Gamma^{S}$ be the signed graph obtained from $\Gamma$ by a switching at $S$. Let $\mathbf{y}=(|x_{1}|, |x_{2}|, \dots,|x_{n}|)^T\in S^{\pm}_{n}$. By \eqref{SnXBiaoDaF1}, we have $F^{\alpha}_{\Gamma^S}(\mathbf{y})=F^{\alpha}_{\Gamma}(\mathbf{x})= \max\left \{F^{\alpha}_{\Gamma^S}(\mathbf{z})\colon \mathbf{z}\in S^{\pm}_{n} \right \}$. We assert that $\mathbf{y}$ is also a vector such that $F^{\alpha }_{\Gamma^S}(\mathbf{y})=\max\left \{F^{\alpha}_{\Gamma^S}(\mathbf{z})\colon \mathbf{z}\in S^{\pm}_{n} \right \}$ with minimal supporting set. Otherwise, suppose that $\mathbf{z}_1=(a_{1},a_{2},\dots,a_{n})^T\in S^{\pm}_{n}$ such that $F^{\alpha}_{\Gamma^S}(\mathbf{z}_1)=F^{\alpha}_{\Gamma^S}(\mathbf{y})$ and $|\mathrm{supp}(\mathbf{z}_1)| < |\mathrm{supp}(\mathbf{y})|$. Let $\mathbf{z}_2=(b_{1},b_{2},\dots,b_{n})^T$ where $b_i=-a_i$ for $i\in S$ and  $b_i=a_i$ for $i \notin S$. Then
	\[
	F^{\alpha}_{\Gamma}(\mathbf{z}_2)=F^{\alpha}_{\Gamma^S}(\mathbf{z}_1)=F^{\alpha}_{\Gamma^S}(\mathbf{y})=F^{\alpha}_{\Gamma}(\mathbf{x}),
	\]
	but $|\mathrm{supp}(\mathbf{z}_2)|=|\mathrm{supp}(\mathbf{z}_1)|<|\mathrm{supp}(\mathbf{y})|=|\mathrm{supp}(\mathbf{x})|$, a contradiction.
	
	We assert that $\mathrm{supp}(\mathbf{y})$ induces an all-positive complete subgraph in $\Gamma^S$, that is, $\mathrm{supp}(\mathbf{x})$ induces a balanced clique in $\Gamma$. Otherwise, there are two vertices in $\mathrm{supp}(\mathbf{y})$, say $i$ and $j$, such that $ij\notin E(\Gamma^S)$ or $\sigma^S(ij)=-1$, where $\sigma^S$ is a sign function of $\Gamma^S$. Let $\mathbf{e}_{i}$ be the $i$-th column of the identity matrix of order $n$. Without loss of generality, assume that $\mathbf{y}^{T}W^{\alpha}_{\Gamma^S}\mathbf{e}_{i}\ge \mathbf{y}^{T}W^{\alpha}_{\Gamma^S}\mathbf{e}_{j}$. Set $\mathbf{y}'=\mathbf{y}+|x_{j}|(\mathbf{e}_{i}-\mathbf{e}_{j})$. Clearly, $\mathbf{y}' \in S^{\pm}_{n}$.
	
	If $ij\notin E(\Gamma)$, then
	$$
	F^{\alpha}_{\Gamma^S}(\mathbf{y}')-F^{\alpha}_{\Gamma^S}(\mathbf{y}) =2|x_{j}|\mathbf{y}^{T}W^{\alpha}_{\Gamma^S}(\mathbf{e}_{i}-\mathbf{e}_{j})\ge 0.
	$$
	
	If $\sigma^S(ij)=-1$, then
	$$
	F^{\alpha}_{\Gamma^S}(\mathbf{y}')-F^{\alpha}_{\Gamma^S}(\mathbf{y}) =2|x_{j}|\mathbf{y}^{T}W^{\alpha}_{\Gamma^S}(\mathbf{e}_{i}-\mathbf{e}_{j})+2x_{j}^{2}\alpha(e)>0.
	$$
	
	Note that $|\mathrm{supp}(\mathbf{y}')| < |\mathrm{supp}(\mathbf{y})|$ and $F^{\alpha}_{\Gamma^S}(\mathbf{y}')\ge F^{\alpha}_{\Gamma^S}(\mathbf{y})$ in either case, contradictions.
	
	
	This completes the proof.\qed
\end{proof}

Let $\Gamma=(G,\sigma)$ be a signed graph. For $U\subseteq V(\Gamma)$, let $\Gamma[U]$ denote the signed subgraph of $\Gamma$ induced by $U$, with edge signs inherited from $\Gamma$. Sometimes, we say that $U$ induces $\Gamma[U]$. By Lemma \ref{induces a balanced clique}, for every $\alpha$, there exists a vector set
$$
M^{\alpha}_{\Gamma}=\left\{\mathbf{x}~|~F_{\Gamma}^{\alpha}(\mathbf{x})=\max \left \{F_{\Gamma}^{\alpha}(\mathbf{z})\colon \mathbf{z}\in S^{\pm}_{n} \right \} \text{with minimal supporting set}\right\}
$$
such that $\mathrm{supp}(\mathbf{x})$ induces a balanced clique in $\Gamma$ for every $\mathbf{x}\in M^{\alpha}_{\Gamma}$.

\begin{lemma}\label{F(z)}
	Let $\alpha=fh$ such that $\alpha(e)\le \frac{|\mathrm{supp}(\mathbf{x})|}{|\mathrm{supp}(\mathbf{x})|-1}$ for every $e\in E(\Gamma[\mathrm{supp}(\mathbf{x})])$ and every $\mathbf{x}\in M^{\alpha}_{\Gamma}$. Then $F^{\alpha}_{\Gamma}(\mathbf{z})\le 1$ for each $\mathbf{z}\in S^{\pm}_{n}$.
\end{lemma}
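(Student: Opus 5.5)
It suffices to bound the maximum of $F^{\alpha}_{\Gamma}$ over $S^{\pm}_{n}$. By compactness of $S^{\pm}_{n}$ and continuity of $F^{\alpha}_{\Gamma}$, the maximum is attained. Among the maximizers we pick one, $\mathbf{x}$, with minimal supporting set, so that $\mathbf{x}\in M^{\alpha}_{\Gamma}$. It is then enough to show $F^{\alpha}_{\Gamma}(\mathbf{x})\le 1$, because every $\mathbf{z}\in S^{\pm}_{n}$ satisfies $F^{\alpha}_{\Gamma}(\mathbf{z})\le F^{\alpha}_{\Gamma}(\mathbf{x})$.

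Let $k=|\mathrm{supp}(\mathbf{x})|$. If $k=1$, the quadratic form involves no edges, so $F^{\alpha}_{\Gamma}(\mathbf{x})=0\le 1$. Assume now $k\ge 2$. By Lemma \ref{induces a balanced clique}, $U:=\mathrm{supp}(\mathbf{x})$ induces a balanced clique $\Gamma[U]$. As in the proof of that lemma, we switch at $S=\{i\colon x_i<0\}$ and pass to $\mathbf{y}=(|x_1|,\dots,|x_n|)^T$. This gives $F^{\alpha}_{\Gamma}(\mathbf{x})=F^{\alpha}_{\Gamma^S}(\mathbf{y})$, and that proof also shows $\Gamma^S[U]$ is an all-positive $K_k$. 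Since only entries in $U$ are nonzero,
\[
F^{\alpha}_{\Gamma}(\mathbf{x})=2\sum_{e=ij\in E(\Gamma[U])}\alpha(e)\,y_iy_j\le \frac{k}{k-1}\cdot 2\sum_{\{i,j\}\subseteq U} y_iy_j,
\]
where we used the hypothesis $\alpha(e)\le k/(k-1)$ on the edges of $\Gamma[U]$ together with $y_iy_j\ge 0$.

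The remaining step is the elementary bound for the complete graph $K_k$, which is the case $\omega_b=k$ of \eqref{equation-clique}, or directly Cauchy--Schwarz:
\[
2\sum_{\{i,j\}\subseteq U}y_iy_j=\Big(\sum_{i\in U} y_i\Big)^2-\sum_{i\in U} y_i^2\le 1-\frac1k .
\]
This holds because $\sum_{i\in U} y_i=1$ and $\sum_{i\in U} y_i^2\ge 1/k$. Combining the two displays gives $F^{\alpha}_{\Gamma}(\mathbf{x})\le \frac{k}{k-1}\cdot\frac{k-1}{k}=1$, which proves the lemma.

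The only delicate point is the sign bookkeeping: we must make sure every term $\sigma(ij)x_ix_j$ with $i,j\in U$ equals $y_iy_j\ge 0$, so that the bound on $\alpha$ can be applied termwise. This follows from the switching argument in Lemma \ref{induces a balanced clique}, which shows that after switching at $S$ all edges inside $U$ are positive. In particular, no negative edge of $\Gamma^S[U]$ survives to spoil the comparison.
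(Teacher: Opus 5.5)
Your proof is correct and follows essentially the same route as the paper: take a maximizer in $M^{\alpha}_{\Gamma}$, use Lemma~\ref{induces a balanced clique} to see that its support is a balanced clique, switch so that this clique is all-positive, apply $\alpha(e)\le k/(k-1)$ termwise, and finish with the clique bound $1-\frac{1}{k}$. The differences are minor: you reuse the switching at the negative entries, so every term is already $y_iy_j\ge 0$ (the paper instead uses an arbitrary switching and passes to absolute values); you prove the clique bound by Cauchy--Schwarz instead of citing \eqref{equation-clique}; and you make explicit the attainment of the maximum and the trivial case $k=1$, which the paper leaves implicit.
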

\begin{proof}
	Let $\mathbf{t}=(t_{1},t_{2},\dots,t_{n})^T\in M^{\alpha}_{\Gamma}$. By Lemma \ref{induces a balanced clique}, $\mathrm{supp}(\mathbf{t})$ forms a balanced clique $(K,\sigma)$ in $\Gamma$ and $|K|=|\mathrm{supp}(\mathbf{t})|$. By \eqref{SnXBiaoDaF1},
	$$
	F^{\alpha}_{\Gamma}(\mathbf{t})=2\sum_{e=ij\in E((K,\sigma))}\sigma(ij)\alpha(e)t_{i}t_{j}.
	$$
	Since $(K,\sigma)$ is balanced, there exists a sign function $\theta$ such that $\Gamma^{\theta}\sim \Gamma$ and $(K,\sigma^{\theta})=(K,+)$. Assume that $\theta(i)=-1$ for $i\in A^{-}$ and $\theta(i)=+1$ for $i\in A^+$ with $A^+\cup A^-=V(\Gamma)$. Let
	$$
	\mathbf{t}^{\prime}=\left\{
	\begin{array}{l}
		t^{\prime}_{i}=-t_{i}, ~~i\in A^-, \\[0.2cm]
		t^{\prime}_{i}=t_{i}, ~~i\in A^+.
	\end{array}\right.
	$$
	By \eqref{equation-clique}, \eqref{SnXBiaoDaF1} and $\alpha(e)\le \frac{|\mathrm{supp}(\mathbf{t})|}{|\mathrm{supp}(\mathbf{t})|-1}$ for every $e\in E(\Gamma[\mathrm{supp}(\mathbf{t})])$, we obtain
	\begin{align*}
		F^{\alpha}_{\Gamma}(\mathbf{t})=F^{\alpha}_{\Gamma^{\theta}}(\mathbf{t^{\prime}})&=2\sum_{e=ij\in E((K,+))}\alpha(e)t^{\prime}_{i}t^{\prime}_{j} \\
		&\le \frac{2|K|}{|K|-1} \sum_{e=ij\in E((K,+))}|t^{\prime}_{i}||t^{\prime}_{j}|\\
		&\le \frac{2|K|}{|K|-1}\left(\frac{|K|-1}{2|K|}\right)=1.
	\end{align*}
	
	This completes the proof.\qed
\end{proof}

Lemma \ref{F(z)} implies the following result immediately.

\begin{cor}\label{the localization result for signed graphs}
	Let $\Gamma=(G,\sigma)$ be a signed graph of order $n$. Then
	\[
	\sum_{e=ij\in E(\Gamma)}\sigma(ij)\alpha(e)\le \frac{n^2}{2}.
	\]
\end{cor}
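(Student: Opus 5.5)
The plan is to evaluate the quadratic form $F^{\alpha}_{\Gamma}$ at the uniform vector and combine this with the global bound of Lemma \ref{F(z)}. Implicitly, the corollary is stated under the same hypothesis as Lemma \ref{F(z)}: namely, $\alpha(e)\le \frac{|\mathrm{supp}(\mathbf{x})|}{|\mathrm{supp}(\mathbf{x})|-1}$ on the edges of each maximizing clique $\Gamma[\mathrm{supp}(\mathbf{x})]$, $\mathbf{x}\in M^{\alpha}_{\Gamma}$. I would state this explicitly, since it is exactly what makes Lemma \ref{F(z)} applicable.

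\textbf{Step 1: choose the test vector.} Take $\mathbf{z}=\frac{1}{n}\mathbf{1}\in\mathbb{R}^n$. Then $\sum_i |z_i| = n\cdot\frac1n=1$, so $\mathbf{z}\in S^{\pm}_{n}$.

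\textbf{Step 2: compute $F^{\alpha}_{\Gamma}(\mathbf{z})$.} By \eqref{SnXBiaoDaF1},
$$F^{\alpha}_{\Gamma}(\mathbf{z})=2\sum_{e=ij\in E(\Gamma)}\sigma(ij)\alpha(e)\cdot\frac{1}{n^2}=\frac{2}{n^2}\sum_{e=ij\in E(\Gamma)}\sigma(ij)\alpha(e).$$

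\textbf{Step 3: apply Lemma \ref{F(z)}.} The lemma gives $F^{\alpha}_{\Gamma}(\mathbf{z})\le 1$. Combining this with Step 2 and multiplying by $n^2/2$ yields
$$\sum_{e=ij\in E(\Gamma)}\sigma(ij)\alpha(e)\le \frac{n^2}{2},$$
which is the claimed inequality.

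The only real point requiring care is confirming that the hypothesis of Lemma \ref{F(z)} is in force. Once that is granted, the computation is immediate, so I expect no genuine obstacle.
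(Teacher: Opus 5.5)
Your proof is correct and is exactly the paper's argument: evaluate $F^{\alpha}_{\Gamma}$ at the uniform vector $\frac{1}{n}\mathbf{1}\in S^{\pm}_{n}$ and invoke Lemma~\ref{F(z)}. You are also right to state explicitly that the corollary inherits the hypothesis of Lemma~\ref{F(z)} on $\alpha$; without that hypothesis the inequality can fail, for instance when $\alpha$ takes very large values on positive edges.
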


\begin{proof}
	Let $\mathbf{x}=\left\{\frac{1}{n},\frac{1}{n},\dots,\frac{1}{n}\right\}\in S^{\pm}_{n}$. By Lemma \ref{F(z)}, we have
	\[
	\frac{2}{n^2}\sum_{e=ij\in E(\Gamma)}\sigma(ij)\alpha(e)\le 1.
	\]
	This completes the proof. \qed
\end{proof}

For a signed graph $\Gamma$, let $c_{b}(e)$ denote the order of the largest balanced clique containing an edge $e$ in $\Gamma$.
\begin{remark}\label{remark1.1-3}
	{\em Set $h\colon e\mapsto c_b(e)$ and $f\colon c_b(e)\mapsto \frac{c_b(e)}{c_b(e)-1}$ for any $e\in E(\Gamma)$. Obviously, $\alpha=fh$ satisfies the conditions of Lemma \ref{F(z)}. By Corollary \ref{the localization result for signed graphs}, we have
		\[
		\sum_{e=ij\in E(\Gamma)}\sigma(ij)\frac{c_{b}(e)}{c_{b}(e)-1}\le \frac{n^2}{2}.
		\]
		Clearly, Corollary \ref{the localization result for signed graphs} yields Equation \eqref{the first localization result for ordinary graphs}.
	}
\end{remark}

Now we proceed to present the proof of Theorem \ref{Localization and the Largest Eigenvalue}.

\medskip
\begin{Tproof}\textbf{~of~Theorem~\ref{Localization and the Largest Eigenvalue}.}~Let $\alpha\in \mathcal{H}$ satisfy the conditions in Lemma \ref{F(z)}.
	By Lemmas \ref{Sun-Liu-Lan-LAA-2022} and \ref{Hou-Tang-Wang-AMC-2019}, assume that $\mathbf{z}=(z_1,z_2,\ldots, z_n)^T$ is a unit non-negative eigenvector of $\lambda_{1}(\Gamma')$. Let $\mathbf{y}=(y_1,y_2,\ldots, y_n)^T$ with $y_{i}=z_{i}^{2}$, $1\le i\le n$. Then
	\begin{align*}
		\lambda_{1}(\Gamma)&=2\sum_{e=ij\in E(\Gamma^{\prime})}\sigma(ij)z_{i}z_{j}\\
		&=2\sum_{e=ij\in E^{+}(\Gamma^{\prime})}\sqrt{y_{i}y_{j}}-2\sum_{e=ij\in E^{-}(\Gamma^{\prime})}\sqrt{y_{i}y_{j}}\\
		&\le 2\sum_{e=ij\in E^{+}(\Gamma^{\prime})}\sqrt{y_{i}y_{j}}\\
		&=2\sum_{e=ij\in E^{+}(\Gamma^{\prime})}\sqrt{\frac{1}{\alpha(e)}}\sqrt{\alpha(e)y_{i}y_{j}}.
	\end{align*}
	By Cauchy-Schwarz inequality, we have
	$$
	\lambda_{1}^{2}(\Gamma)\le 4\sum_{e\in  E^{+}(\Gamma^{\prime})}\frac{1}{\alpha(e)} \sum_{e=ij\in  E^{+}(\Gamma^{\prime})}\alpha(e)y_{i}y_{j}.
	$$
	By Lemma \ref{F(z)}, we obtain
	$$
	\lambda_{1}^{2}(\Gamma) \le 2\sum_{e\in  E^{+}(\Gamma^{\prime})}\frac{1}{\alpha(e)}.
	$$
	
	In the following, we characterize the graphs attaining the upper bounds in \eqref{upper bound for the index}. By Theorem \ref{Liu-Ning-AR-2025}, we only need to prove that the equality in \eqref{upper bound for the index} does not hold for any unbalanced signed graph. To the contrary, we assume that there exists an unbalanced signed graph $\Gamma$ with $\lambda_{1}(\Gamma)$ such that
	$$
	\lambda_{1}(\Gamma)=\sqrt{2\sum_{e\in E^{+}(\Gamma^{\prime})}\frac{1}{\alpha(e)}}.
	$$
	Then $E^{-}(\Gamma')\ne \emptyset$. Let $S$ be the set of underlying edges corresponding to $E^{-}(\Gamma')$. So $\Gamma'-S$ is balanced and then $\lambda_1(\Gamma'-S)$ has a non-negative eigenvector. By Lemma \ref{LanCor-S}, \eqref{upper bound for the index}, $\lambda_{1}(\Gamma)=\lambda_{1}(\Gamma')$, and $\lambda_{1}(\Gamma-S)=\lambda_{1}(\Gamma'-S)$, we have
	\begin{align*}
		\lambda_{1}^{2}(\Gamma) &\le  \lambda_{1}^{2}(\Gamma-S)\\
		&\le 2\sum_{e\in E^{+}(\Gamma^{\prime}-S)}\frac{1}{fw|_{E(\Gamma-S)}(e)} \\
		&\le 2\sum_{e\in E^{+}(\Gamma^{\prime}-S)}\frac{1}{fw|_{E(\Gamma)}(e)}\\
		&\le 2\sum_{e\in E^{+}(\Gamma^{\prime})}\frac{1}{fw|_{E(\Gamma)}(e)}.
	\end{align*}
	This implies that
	$$
	\lambda_{1}(\Gamma)=\lambda_{1}(\Gamma-S).
	$$
	
	Note that $\Gamma-S$ is balanced, since $\Gamma'-S$ is balanced. By Lemma \ref{LanCor-S}, we have $\lambda_{1}(\Gamma)<\lambda_{1}(\Gamma-S)$, a contradiction.
	This completes the proof. \qed
\end{Tproof}

Let $q_b(e)$ denote the length of a longest balanced cycle containing $e$ if $e$ belongs to some cycle of $\Gamma$, and set $q_b(e)=2$ otherwise. Set $h\colon e\mapsto q_b(e)$ and $f\colon q_b(e)\mapsto \frac{q_b(e)}{q_b(e)-1}$ for any $e\in E(\Gamma)$. It is easy to check that $\alpha=fh$ satisfies the conditions of Lemma \ref{F(z)} and Theorem \ref{Localization and the Largest Eigenvalue}. Then we obtain the following result.

\begin{cor}
	Let $\Gamma=(G,\sigma)$ be a signed graph of order $n$ with $\omega_{b}(\Gamma)=\omega_{b} \ge 2$. Then
	\begin{equation}\label{1}
		\lambda_{1}(\Gamma) \le \sqrt{2\sum_{e\in E^{+}(\Gamma^{\prime})}\frac{q_b(e)-1}{q_b(e)}},
	\end{equation}
	where  $\Gamma^{\prime} \sim \Gamma$ and $\lambda_{1}(\Gamma^{\prime})$ has a non-negative eigenvector. Equality holds if and only if $\Gamma$ is switching equivalent to a balanced complete bipartite graph for $\omega_{b}=2$, or a balanced complete regular $\omega_{b}$-partite graph for $\omega_{b}\ge 3$ $($possibly with some isolated vertices$)$.
\end{cor}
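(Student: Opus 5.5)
The plan is to obtain the corollary as a direct specialization of Theorem \ref{Localization and the Largest Eigenvalue}. We take $h(e)=q_b(e)$ and $f(t)=\frac{t}{t-1}$, so that $\alpha(e)=\frac{q_b(e)}{q_b(e)-1}$ and $1/\alpha(e)=\frac{q_b(e)-1}{q_b(e)}$. Then \eqref{upper bound for the index} becomes exactly \eqref{1}.

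The first step is to check the hypotheses. The map $f$ is strictly decreasing on $t>1$, and $h$ has the form \eqref{localized version of extremal problems in signed graphs} for the nested family of balanced cycles, together with the convention $q_b(e)=2$ for edges on no cycle.

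The substantive step is the hypothesis of Lemma \ref{F(z)}: for every $\mathbf{x}\in M^{\alpha}_{\Gamma}$ and every edge $e$ inside $\Gamma[\mathrm{supp}(\mathbf{x})]$ we need $\alpha(e)\le \frac{k}{k-1}$, where $k=|\mathrm{supp}(\mathbf{x})|$. By Lemma \ref{induces a balanced clique}, $\mathrm{supp}(\mathbf{x})$ induces a balanced clique of order $k$. If $k\ge 3$, every edge of this clique lies on a Hamiltonian cycle of the clique, and that cycle is balanced because the clique is balanced. Hence $q_b(e)\ge k$, and since $t\mapsto \frac{t}{t-1}$ is decreasing, $\alpha(e)\le \frac{k}{k-1}$. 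If $k=2$, the clique is a single edge and $q_b(e)\ge 2$, giving $\alpha(e)\le 2=\frac{k}{k-1}$. If $k=1$ there are no edges and the condition is vacuous. The bound \eqref{1} then follows from Theorem \ref{Localization and the Largest Eigenvalue}.

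For equality, Theorem \ref{Localization and the Largest Eigenvalue} already forces $\Gamma$ to be switching equivalent, up to isolated vertices, to a balanced complete $r$-partite graph satisfying conditions (1)--(2). So we may switch to the all-positive complete $r$-partite graph $K$ and apply the unsigned analysis, in the spirit of Theorem \ref{Liu-Sun-Wang-Wu-AR-2026}. Here $r=\omega_b$.
\begin{itemize}
\item If $r=2$, every edge lies on a $4$-cycle when both parts have size at least $2$, or is a bridge of a star. We would then verify directly that $\lambda_1(K_{a,b})=\sqrt{ab}$ equals the right-hand side of \eqref{1}. This forces $q_b$ on each edge to match the value the bound requires, and we expect this to be the point where the precise convention for $q_b$ in bipartite graphs must be handled carefully.
\item If $r\ge 3$, the Cauchy--Schwarz step in the proof of Theorem \ref{Localization and the Largest Eigenvalue} must be tight, which makes $\alpha(e)y_iy_j$ proportional to $1/\alpha(e)$. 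The step through Lemma \ref{F(z)} must also be tight, which by \eqref{equation-clique} forces the Perron vector squared to be uniform on a maximum clique structure. Together these should force equal part sizes, that is, a regular complete $\omega_b$-partite graph. Conversely, for regular complete multipartite graphs one checks equality by direct computation.
\end{itemize}

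The main obstacle is the equality analysis. The values $q_b(e)$ in a complete multipartite graph are generally larger than $\omega_b$, since Hamiltonian cycles exist. One must therefore reconcile the tightness conditions with the actual values of $q_b$, and I would first test small cases such as $K_{2,2,2}$ and $K_{1,1,2}$ to see which graphs truly attain equality.
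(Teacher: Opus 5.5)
Your proof of the inequality is correct and follows the paper's route exactly: specialize Theorem~\ref{Localization and the Largest Eigenvalue} with $h=q_b$ and $f(t)=t/(t-1)$. Your Hamiltonian-cycle argument (every edge of a balanced clique of order $k\ge 3$ lies on a balanced $k$-cycle, so $q_b(e)\ge k$) is precisely the check of the hypothesis of Lemma~\ref{F(z)} that the paper dismisses as ``easy to check''. The equality part, however, cannot be completed, because the characterization in the statement is false. In particular, your claim that for balanced complete (regular) multipartite graphs ``one checks equality by direct computation'' fails. In $(K_{2,2},+)$ every edge lies on a $4$-cycle, so $q_b\equiv 4$ and the right-hand side is $\sqrt{2\cdot 4\cdot\frac34}=\sqrt6>2=\lambda_1$. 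In the regular complete $3$-partite graph $(K_{2,2,2},+)$ every edge lies on a Hamiltonian $6$-cycle, so the right-hand side is $\sqrt{2\cdot 12\cdot\frac56}=\sqrt{20}>4=\lambda_1$. No convention for $q_b$ repairs this. The paper's derivation has the same defect. It simply inherits the equality clause of Theorem~\ref{Localization and the Largest Eigenvalue}, which does not depend on $\alpha$ even though the right-hand side does. The appeal to Theorem~\ref{Liu-Ning-AR-2025} in that proof characterizes equality only for the weight $\alpha(e)=c(e)/(c(e)-1)$.

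Your own clique argument does give $q_b(e)\ge c_b(e)$ for every edge. Hence the $q_b$-bound dominates the $c_b$-bound of Corollary~\ref{Xie-Liu-ar-2025-cor-1}, and equality in the former forces two things: equality in the latter, and $q_b(e)=c_b(e)$ for every $e\in E^{+}(\Gamma')$. Every edge of $K_{s,\dots,s}$ with $r\ge 3$ parts and $s\ge 2$ lies on a Hamiltonian cycle of length $rs>r$. Every edge of $K_{a,b}$ with $a,b\ge 2$ lies on a $4$-cycle. So the true extremal graphs are only the balanced complete graphs $K_{\omega_b}$ ($\omega_b\ge 3$) and the balanced stars $K_{1,b}$ ($\omega_b=2$), up to switching and isolated vertices. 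These do attain equality, since $\lambda_1(K_k)=k-1=\sqrt{k(k-1)\cdot\frac{k-1}{k}}$ and $\lambda_1(K_{1,b})=\sqrt b=\sqrt{2b\cdot\frac12}$. This corrected statement also needs $\Gamma$ to have a single nontrivial component. Otherwise $\Gamma'$ is not unique, and, for example, $(K_3,+)\cup(K_2,-)$ with $\Gamma'=\Gamma$ also attains equality. Your instinct to test $K_{2,2,2}$ was the right one: carrying it out shows the statement must be corrected, not proved.
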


Similarly, set $h\colon e\mapsto c_b(e)$ and $f\colon c_b(e)\mapsto \frac{c_b(e)}{c_b(e)-1}$ for any $e\in E(\Gamma)$. Clearly, $\alpha=fh$ satisfies the conditions of Lemma \ref{F(z)} and Theorem \ref{Localization and the Largest Eigenvalue}. Hence, we have the following result.

\begin{cor}\emph{(See \cite[Theorem~1.1]{Xie-Liu-ar-2025})}\label{Xie-Liu-ar-2025-cor-1}
	Let $\Gamma=(G,\sigma)$ be a signed graph of order $n$ with $\omega_{b}(\Gamma)=\omega_{b} \ge 2$. Then
	\begin{equation}\label{1}
		\lambda_{1}(\Gamma) \le \sqrt{2\sum_{e\in E^{+}(\Gamma^{\prime})}\frac{c_{b}(e)-1}{c_{b}(e)}},
	\end{equation}
	where  $\Gamma^{\prime} \sim \Gamma$ and $\lambda_{1}(\Gamma^{\prime})$ has a non-negative eigenvector. Equality holds if and only if $\Gamma$ is switching equivalent to a balanced complete bipartite graph for $\omega_{b}=2$, or a balanced complete regular $\omega_{b}$-partite graph for $\omega_{b}\ge 3$ $($possibly with some isolated vertices$)$.
\end{cor}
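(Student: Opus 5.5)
The plan is to obtain Corollary \ref{Xie-Liu-ar-2025-cor-1} as the special case of Theorem \ref{Localization and the Largest Eigenvalue} with $h\colon e\mapsto c_b(e)$ and $f(t)=\frac{t}{t-1}$. Then $\frac{1}{\alpha(e)}=\frac{c_b(e)-1}{c_b(e)}$, and \eqref{1} is exactly \eqref{upper bound for the index}.

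\textbf{Step 1: admissibility of $\alpha$.} The function $f(t)=t/(t-1)$ is positive and strictly decreasing on $(1,\infty)$, and $c_b(e)\ge 2$ for every edge. The sequence $F_i$ is the balanced clique of order $i+1$, so $h=c_b$ fits \eqref{localized version of extremal problems in signed graphs}. For the hypothesis of Lemma \ref{F(z)}, take $\mathbf{x}\in M^{\alpha}_{\Gamma}$. By Lemma \ref{induces a balanced clique}, $\mathrm{supp}(\mathbf{x})$ induces a balanced clique of order $k=|\mathrm{supp}(\mathbf{x})|$. Every edge $e$ inside it satisfies $c_b(e)\ge k$, and since $f$ is decreasing, $\alpha(e)\le \frac{k}{k-1}$. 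This is the required condition, so Lemma \ref{F(z)} applies and the inequality follows from Theorem \ref{Localization and the Largest Eigenvalue}.

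\textbf{Step 2: equality, sufficiency.} If $\Gamma$ is switching equivalent to a balanced complete bipartite graph $K_{a,b}$ (plus isolated vertices), then $\Gamma'$ is all positive and $c_b(e)=2$ for every edge. The right side is $\sqrt{2ab\cdot\frac12}=\sqrt{ab}=\lambda_1$. If $\Gamma$ is a balanced complete regular $\omega_b$-partite graph with parts of size $s$, then $\lambda_1=(\omega_b-1)s$ and every edge has $c_b(e)=\omega_b$. The right side is
\[
\sqrt{2\binom{\omega_b}{2}s^2\cdot\frac{\omega_b-1}{\omega_b}}=(\omega_b-1)s,
\]
so equality holds.

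\textbf{Step 3: equality, necessity.} By Theorem \ref{Localization and the Largest Eigenvalue}, equality forces $\Gamma$ to be switching equivalent (up to isolated vertices) to a balanced complete $r$-partite graph satisfying conditions (1) and (2). Two points must then be shown. First, $r=\omega_b$, because in a complete $r$-partite graph the largest clique has order $r$. Second, for $r\ge 3$ the parts must have equal size.

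This second point is the main obstacle: translating the vector conditions (1)--(2) into equal part sizes. I would rather redo the equality analysis of the Cauchy--Schwarz step directly. Equality there requires $\alpha(e)z_iz_j$ to be proportional to $1/\alpha(e)$ on positive edges; with $\alpha$ constant ($=\frac{r}{r-1}$), this means $z_iz_j$ is constant over all edges. Equality in Lemma \ref{F(z)} at $\mathbf{y}=(z_i^2)$ requires $\mathbf{y}$ to attain the Motzkin--Straus maximum \eqref{equation-clique}.

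For $r\ge3$, every vertex lies in a triangle, and $z_iz_j=z_jz_k=z_iz_k$ on a triangle forces $z_i=z_j=z_k$. Hence the Perron vector is constant on non-isolated vertices, so the graph is regular, which means equal parts. For $r=2$, the condition $z_iz_j=\text{const}$ holds automatically for $K_{a,b}$ (the Perron vector is constant on each side), so no regularity is needed. This matches the statement.

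Alternatively, for the necessity direction one can cite Theorem \ref{Liu-Ning-AR-2025} with all weights equal. Its known specialization for $a(e)\equiv1$ yields exactly complete bipartite graphs or regular complete $r$-partite graphs, and this can be applied to the balanced switching $\Gamma'$.
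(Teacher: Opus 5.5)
Your proposal is correct and follows the paper's route: the paper derives this corollary by taking $h=c_b$ and $f(t)=t/(t-1)$ in Theorem \ref{Localization and the Largest Eigenvalue}, with the hypothesis of Lemma \ref{F(z)} checked exactly as in your Step 1. Your triangle argument from the Cauchy--Schwarz equality case (constant Perron vector, hence equal parts for $r\ge 3$) makes explicit the regularity that the paper takes over from Theorem \ref{Localization and the Largest Eigenvalue} and Liu--Ning without comment, and it contains two harmless slips: indexing $F_i$ by order $i+1$ gives $h=c_b-1$, so index by order $i$ instead; and the equality condition is that $\alpha(e)z_iz_j$ is constant, which agrees with what you use because $\alpha\equiv r/(r-1)$ there.
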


Denote by
$$
d_{\Gamma}^+(v)=\left|\left\{v_j\in N_{\Gamma}(v) \mid \sigma(v v_j)=1\right\}\right|
$$
the positive degree of $v$ in a signed graph $\Gamma$.

\begin{cor}\label{positive degree and localized clique}
	Let $\Gamma=(G,\sigma)$ be a signed graph with $\omega_{b}(\Gamma)=\omega_b\ge 2$. Then
	\begin{equation}\label{vertex clique}
		\lambda_1(\Gamma)\le \sqrt{\sum_{v\in V(\Gamma)}d_{\Gamma^{\prime}}^{+}(v)\frac{c_b(v)-1}{c_b(v)}},
	\end{equation}
	where  $\Gamma^{\prime} \sim \Gamma$ and $\lambda_{1}(\Gamma^{\prime})$ has a non-negative eigenvector. Equality holds if and only if $\Gamma$ is switching equivalent to a balanced complete bipartite graph for $\omega_{b}=2$, or a balanced complete regular $\omega_{b}$-partite graph for $\omega_{b}\ge 3$ $($possibly with some isolated vertices$)$.
\end{cor}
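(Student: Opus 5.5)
We derive Corollary \ref{positive degree and localized clique} from Corollary \ref{Xie-Liu-ar-2025-cor-1} by passing from an edge-localized bound to a vertex-localized one. Work in $\Gamma'$, switching equivalent to $\Gamma$, where $\lambda_1(\Gamma')=\lambda_1(\Gamma)$ has a non-negative eigenvector. Switching preserves balance of every subgraph, so $c_b(e)$ and $c_b(v)$ are the same in $\Gamma$ and $\Gamma'$. Corollary \ref{Xie-Liu-ar-2025-cor-1} then gives
\[
\lambda_1^2(\Gamma)\le 2\sum_{e\in E^+(\Gamma')}\frac{c_b(e)-1}{c_b(e)} .
\]

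The key step is a pointwise comparison. For an edge $e=uv$, any balanced clique containing $e$ contains both $u$ and $v$. Hence $c_b(e)\le \min\{c_b(u),c_b(v)\}$. Since $t\mapsto (t-1)/t$ is increasing, this yields
\[
2\cdot\frac{c_b(e)-1}{c_b(e)}\le \frac{c_b(u)-1}{c_b(u)}+\frac{c_b(v)-1}{c_b(v)} .
\]
Summing over $e\in E^+(\Gamma')$ and regrouping by endpoints, each vertex $v$ appears exactly $d^+_{\Gamma'}(v)$ times. This gives
\[
2\sum_{e\in E^+(\Gamma')}\frac{c_b(e)-1}{c_b(e)}\le \sum_{v\in V(\Gamma)} d^+_{\Gamma'}(v)\,\frac{c_b(v)-1}{c_b(v)},
\]
which proves \eqref{vertex clique}.

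For equality, both inequalities must be tight. First, equality in Corollary \ref{Xie-Liu-ar-2025-cor-1} forces $\Gamma$ to be switching equivalent to a balanced complete bipartite graph (when $\omega_b=2$) or to a balanced complete regular $\omega_b$-partite graph (when $\omega_b\ge3$), up to isolated vertices. Conversely, suppose $\Gamma$ is of this form. Then we may take $\Gamma'$ to be the all-positive version, since its Perron vector is non-negative. In that graph every non-isolated vertex and every edge lies in a largest balanced clique of order $\omega_b$, by picking one vertex from each part. So $c_b(e)=c_b(u)=c_b(v)=\omega_b$ for every edge, and the comparison step is tight.

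The main obstacle is that the right-hand side of \eqref{vertex clique} depends on the choice of $\Gamma'$. The bound itself holds for every admissible $\Gamma'$, since the argument above works for any of them. For the converse direction we must also check that equality holds for any admissible $\Gamma'$, not just the all-positive one. The plan is to argue that a connected balanced graph has, up to a global sign, a unique switching with a non-negative Perron eigenvector. That switching makes all edges on the non-isolated component positive, so $d^+_{\Gamma'}=d$ on that component. Edges meeting isolated vertices do not exist, so the isolated vertices contribute nothing. Hence the bound is attained for every admissible $\Gamma'$.
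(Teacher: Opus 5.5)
Your proposal is correct and follows the paper's own route: apply Corollary \ref{Xie-Liu-ar-2025-cor-1}, bound edge by edge using $c_b(e)\le\min\{c_b(u),c_b(v)\}$ and the monotonicity of $t\mapsto (t-1)/t$, and for equality combine the edge-version characterization with the fact that $c_b(e)=c_b(v)=\omega_b$ on the non-isolated part of the extremal graphs. You also check that every admissible $\Gamma'$ is all-positive on the nontrivial component, because $\lambda_1$ is simple there and a non-negative eigenvector forces a constant switching sign; this is correct and makes explicit a dependence on $\Gamma'$ that the paper leaves implicit.
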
	

\begin{proof}
	By Corollary \ref{Xie-Liu-ar-2025-cor-1}, we have
	\[
	\lambda_{1}^2(\Gamma) \le 2\sum_{e\in E^{+}(\Gamma^{\prime})}\frac{c_{b}(e)-1}{c_{b}(e)}.
	\]
	Clearly, for any edge $e=uv\in E(\Gamma)$, $c_b(e)\le \min \left\{c_b(v),c_b(u)\right\}$. Hence,
	\[
	\lambda_{1}^2(\Gamma) \le \sum_{e=uv\in E^{+}(\Gamma^{\prime})}\left(\frac{c_{b}(v)-1}{c_{b}(v)}+\frac{c_{b}(u)-1}{c_{b}(u)}\right) =\sum_{v\in V(\Gamma)}d_{\Gamma^{\prime}}^{+}(v)\frac{c_b(v)-1}{c_b(v)}.
	\]
	
	Furthermore, if equality holds in \eqref{vertex clique}, then equality holds in \eqref{1} and $c_{b}(e)=c_{b}(v)$ for any $e\in E(\Gamma)$ and $v\in V(\Gamma)$, that is, (i) $\Gamma$ is switching equivalent to a balanced complete bipartite graph for $\omega_{b}=2$, or a balanced complete regular $\omega_{b}$-partite graph for $\omega_{b}\ge 3$ $($possibly with some isolated vertices$)$, and (ii) $c_b(e)=c_b(v)$ for any $e\in E(\Gamma)$ and $v\in V(\Gamma)$. Note that any graph satisfying (i)  also satisfies (ii). Thus, if equality holds in \eqref{1}, then equality holds in \eqref{vertex clique}. This implies that extremal graphs attaining \eqref{1} and \eqref{vertex clique} are the same.
	
	This completes the proof. \qed
\end{proof}

\begin{remark}\label{remark1.1-2}
	{\em (a)~Let $G$ be a graph with $\omega(G)=\omega$. Recently, Liu and Ning prove \cite[Corollary~2.6]{Liu-Ning-AR-2025} that
		\[
		\lambda_1(G)\le \sqrt{\sum_{v\in V(G)}d_{G}(v)\frac{c(v)-1}{c(v)}}.
		\]
		Equality holds if and only if $G$ is a complete bipartite graph for $\omega=2$, or a   complete regular $\omega$-partite graph for $\omega\ge 3$ (possibly with some isolated vertices). Clearly, Corollary \ref{positive degree and localized clique} is the signed graph version of the above result.
		
		(b)~Let $\Gamma$ be a signed graph with $m$ edges and frustration index $\epsilon(\Gamma)$. It is known \cite[Lemma~3.1]{Kan-Pragada-LAA-2023} that if $\Gamma \sim \Gamma^{\prime}$, then $|E^{+}(\Gamma^{\prime})|\le m-\epsilon(\Gamma)$. By Corollary \ref{positive degree and localized clique} and the fact that $c_{b}(v)\le \omega_{b}(\Gamma)$ for any $v\in V(\Gamma)$, we have
		\[
		\lambda_{1}^{2}(\Gamma)\le \sum_{v\in V(\Gamma)}d_{\Gamma^{\prime}}^{+}(v)\frac{\omega_{b}(\Gamma)-1}{\omega_{b}(\Gamma)} =2|E^{+}(\Gamma^{\prime})|\frac{\omega_{b}(\Gamma)-1}{\omega_{b}(\Gamma)} \le 2(m-\epsilon(\Gamma))\frac{\omega_{b}(\Gamma)-1}{\omega_{b}(\Gamma)},
		\]
		that is,
		\begin{equation}\label{Kan-Pragada-LAA-2023-m-equation-1}
			\lambda_1(\Gamma) \leq \sqrt{2\bigl(m - \epsilon(\Gamma)\bigr)\left(1 - \frac{1}{\omega_b(\Gamma)}\right)}.
		\end{equation}
		This result was also established by Kannan and Pragada in 2023 (See \cite[Theorem~3.3]{Kan-Pragada-LAA-2023}).
		
		Furthermore, if equality holds in \eqref{Kan-Pragada-LAA-2023-m-equation-1}, then equality holds in \eqref{vertex clique} and $c_{b}(e)=c_{b}(v)$ for any $e\in E(\Gamma)$ and $v\in V(\Gamma)$, that is, (i) $\Gamma$ is switching equivalent to a balanced complete bipartite graph for $\omega_{b}=2$, or a balanced complete regular $\omega_{b}$-partite graph for $\omega_{b}\ge 3$ $($possibly with some isolated vertices$)$, (ii) $c_b(v)= \omega_b(\Gamma)$ for every $v\in V(\Gamma)$, and (iii) $|E^{+}(\Gamma^{\prime})|= m-\epsilon(\Gamma)$. Note that any graph satisfying (i) also satisfies (ii) and (iii). Thus, if equality holds in \eqref{vertex clique}, then equality holds in \eqref{Kan-Pragada-LAA-2023-m-equation-1}. This implies that extremal graphs attaining \eqref{vertex clique} and \eqref{Kan-Pragada-LAA-2023-m-equation-1} are the same.

		It is worth noting that the equality condition of \eqref{Kan-Pragada-LAA-2023-m-equation-1} is also given in \cite{Lan-Li-Liu-BMMS-2023}.
	}
\end{remark}

\section{Proof of Theorem \ref{Localization of signed walk}}\label{Localization of signed walk-section}

\begin{Tproof}\textbf{~of~Theorem~\ref{Localization of signed walk}.}
	By Lemma \ref{Sun-Liu-Lan-LAA-2022}, there exists a signed graph $\Gamma^{\prime}$ such that $\Gamma^{\prime}\sim \Gamma$ and $\lambda_{1}(\Gamma^{\prime})$ has a non-negative eigenvector. Denote by $S$ the set of underlying edges corresponding to $E^{-}(\Gamma')$. Let $w_r^{\prime}(v)$ be the number of walks with $r$ vertices starting at $v$ in $\Gamma^{\prime}-S$ and let $c^{\prime}(v)$ be the order of the largest clique containing $v$ in $\Gamma^{\prime}-S$. By Lemma \ref{LanCor-S} and Theorem \ref{Liu-Sun-Wang-Wu-AR-2026}, we have
	$$
	\lambda^r_1(\Gamma) \le \lambda^r_1(\Gamma^{\prime}-S)\le \sum_{v \in V(\Gamma^{\prime}-S)} w_r^{\prime}(v) \frac{c^{\prime}(v)-1}{c^{\prime}(v)}.
	$$
	Note that $w_r^{\prime}(v)\le \zeta_r(v)$ and $c^{\prime}(v)\le c_b(v)$. Thus,
	$$
	\lambda^r_1(\Gamma) \le \sum_{v \in V(\Gamma^{\prime}-S)}\zeta_r(v) \frac{ c_b(v)-1}{ c_b(v)}\le \sum_{v \in V(\Gamma)} \zeta_r(v) \frac{c_b(v)-1}{c_b(v)}.
	$$
	
	In the following, we characterize the graphs attaining the upper bounds in \eqref{signed walk and largest eigenvalue}. By the equality in \eqref{walk and largest eigenvalue}, we only need to prove that the equality in \eqref{signed walk and largest eigenvalue} does not hold for any unbalanced signed graph. To the contrary, assume that there exists an unbalanced signed graph $\Gamma$ such that
	$$
	\lambda^r_1(\Gamma) = \sum_{v \in V(\Gamma)} \zeta_r(v) \frac{c_b(v)-1}{c_b(v)}.
	$$
	Then $E^{-}(\Gamma')\ne \emptyset$. Let $M$ be the set of underlying edges corresponding to $E^{-}(\Gamma')$. So $\Gamma'-M$ is balanced and then $\lambda_1(\Gamma'-M)$ has a non-negative eigenvector. By Lemma \ref{LanCor-S}, \eqref{signed walk and largest eigenvalue}, $\lambda_{1}(\Gamma)=\lambda_{1}(\Gamma')$, and $\lambda_{1}(\Gamma-M)=\lambda_{1}(\Gamma'-M)$, we have
	$$
	\lambda_{1}^r(\Gamma)\le \lambda_{1}^r(\Gamma-M)\le \sum_{v \in V(\Gamma^{\prime}-M)} \zeta^{\prime}_r(v) \frac{c''_b(v)-1}{c''_b(v)},
	$$
	where $\zeta^{\prime}_r(v)$ is the $r$-satisfaction vertex index of $v$ in $\Gamma^{\prime}-M$ and $c''_b(v)$ is the order of the largest clique containing $v$ in $\Gamma^{\prime}-M$. Note that $\zeta^{\prime}_r(v)\le\zeta_r(v)$ and $c''_b(v)\le c_b(v)$. We have
	$$
	\sum_{v \in V(\Gamma^{\prime}-M)} \zeta^{\prime}_r(v) \frac{c''_b(v)-1}{c''_b(v)}\le \sum_{v \in V(\Gamma)}\zeta_r(v) \frac{c_b(v)-1}{c_b(v)}.
	$$
	This implies that
	$$
	\lambda_{1}(\Gamma)=\lambda_{1}(\Gamma-M).
	$$
	
	Note that $\Gamma-M$ is balanced, since $\Gamma'-M$ is balanced. By Lemma \ref{LanCor-S}, we have $\lambda_{1}(\Gamma)<\lambda_{1}(\Gamma-M)$, a contradiction.
	
	This completes the proof. \qed
\end{Tproof}


Let $r=1$. Theorem \ref{Localization of signed walk} implies the following result immediately.

\begin{cor}\label{local refinement of signed graph version of Wilf's theorem}
	Let $\Gamma=(G,\sigma)$ be a signed graph of order $n$. Then
	\begin{equation}\label{Localization of signed graph version of Wilf's theorem}
		\lambda_{1}(\Gamma) \le \sum_{v\in V(\Gamma)}\frac{c_b(v)-1}{c_b(v)}.
	\end{equation}
	Equality holds if and only if $\Gamma$ is switching equivalent to, up to isolated vertices, a balanced complete regular multi-partite graph.
\end{cor}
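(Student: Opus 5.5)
The inequality is the case $r=1$ of Theorem \ref{Localization of signed walk}. By definition $\zeta_1(v)=1$ for every vertex $v$, since the only walk with one vertex starting at $v$ is the trivial walk, and it is positive in every $\Gamma'\sim\Gamma$. Substituting $r=1$ into \eqref{signed walk and largest eigenvalue} therefore gives exactly \eqref{Localization of signed graph version of Wilf's theorem}. We may assume $\Gamma$ has at least one edge, as Theorem \ref{Localization of signed walk} requires. If $\Gamma$ is edgeless, then $\lambda_1(\Gamma)=0$ and $c_b(v)=1$ for every $v$, so both sides vanish; this case is covered by the phrase ``up to isolated vertices'' with the empty multipartite graph.

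For the equality case I will read off the characterization from Theorem \ref{Localization of signed walk} with $r=1$ and then merge its two cases.

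\emph{Case $\omega_b>2$.} The theorem gives directly that $\Gamma$ is switching equivalent to a balanced complete regular $\omega_b$-partite graph, up to isolated vertices.

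\emph{Case $\omega_b=2$.} The theorem gives a balanced complete bipartite graph. Since $r=1$ is odd, the clause ``moreover, if $r$ is odd, then the component is regular'' applies, so the bipartite component is regular. Thus in both cases $\Gamma$ is, up to switching and isolated vertices, a balanced complete regular multipartite graph.

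For the converse I will check directly that such a graph attains equality. Take $\Gamma\sim(K_{t,\dots,t},+)$ with $k$ parts of size $t$, plus isolated vertices. Switching preserves the spectrum by Lemma \ref{Hou-Tang-Wang-AMC-2019}, and it preserves balanced cliques as well. We have $\lambda_1=(k-1)t$, and $c_b(v)=k$ for each of the $kt$ nonisolated vertices. Each isolated vertex has $c_b(v)=1$ and contributes $0$. The right-hand side is then $kt\cdot\frac{k-1}{k}=(k-1)t$, so equality holds.

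The only point needing care is that the converse direction is not stated separately for general $k$ in the theorem. The direct verification above settles it, and it also covers the graphs with $k$ parts where $k$ differs from nothing else: by construction $k=\omega_b$ for such graphs, so they are indeed of the form described.
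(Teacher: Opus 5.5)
Your proposal is correct and takes the same route as the paper, which obtains this corollary by setting $r=1$ in Theorem \ref{Localization of signed walk}: you use $\zeta_1(v)=1$, and for $r=1$ the odd-$r$ regularity clause merges the $\omega_b=2$ and $\omega_b>2$ equality cases. Your separate treatment of the edgeless case and your direct check of the converse are valid additions; the paper also gives a second proof that chains Corollary \ref{positive degree and localized clique} with Proposition \ref{signed degree and localized clique}.
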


\begin{remark}\label{Rem-for-Wang-Yan-Qian-LAA-2021}
	{\em Let $\Gamma$ be a signed graph with $n$ vertices.  In 2021, Wang, Yan and Qian proved \cite[Proposition~5]{Wang-Yan-Qian-LAA-2021} that
		\begin{equation}\label{a signed version of Wilf' theorem}
			\lambda_{1}(\Gamma) \le n\left(1-\frac{1}{\omega_b(\Gamma)}\right),
		\end{equation}
		which is the signed version of Wilf's theorem. Recall that $c_b(v)\le \omega_b(\Gamma)$ for every $v\in V(\Gamma)$.  Then \eqref{Localization of signed graph version of Wilf's theorem} implies \eqref{a signed version of Wilf' theorem} immediately.
		
		Let $\Sigma'$ be as in Remark \ref{Kan-Pragada-LAA-2023-m-extremal graphs}. By a simple calculation, we have
		$$
		\sum_{v\in V(\Sigma')}\frac{c_b(v)-1}{c_b(v)}=\frac{19}{6}
		$$
		and
		$$
		n\left(1-\frac{1}{\omega_b(\Sigma')}\right)=\frac{10}{3}.
		$$
		Clearly, $\frac{19}{6}$ is better than $\frac{10}{3}$.
		
		Furthermore, if equality holds in \eqref{a signed version of Wilf' theorem}, then equality holds in \eqref{Localization of signed graph version of Wilf's theorem} and $c_b(v)= \omega_b(\Gamma)$ for every $v\in V(\Gamma)$, that is, (i) $\Gamma$ is switching equivalent to, up to isolated vertices, a balanced complete regular multi-partite graph, and (ii) $c_b(v)= \omega_b(\Gamma)$ for every $v\in V(\Gamma)$. It is clear that any graph satisfying (i) also satisfies (ii). Thus, if equality holds in \eqref{Localization of signed graph version of Wilf's theorem}, then equality holds in \eqref{a signed version of Wilf' theorem}. This implies that extremal graphs attaining \eqref{a signed version of Wilf' theorem} and \eqref{Localization of signed graph version of Wilf's theorem} are the same.
		
		It is worth noting that the equality condition of \eqref{a signed version of Wilf' theorem} is also given in \cite{Lan-Li-Liu-BMMS-2023}.
	}
\end{remark}
Let $r=2$. Theorem \ref{Localization of signed walk} implies a weaker version of Corollary \ref{positive degree and localized clique}.
\begin{cor}
	Let $\Gamma=(G,\sigma)$ be a signed graph with $\omega_{b}(\Gamma)=\omega_b$. Then
	\begin{equation}
		\lambda_1(\Gamma)\le \sqrt{\sum_{v\in V(\Gamma)}d_{\Gamma}(v)\frac{c_b(v)-1}{c_b(v)}}.
	\end{equation}
	Equality holds if and only if $\Gamma$ is switching equivalent to a balanced complete bipartite graph for $\omega_{b}=2$, or a balanced complete regular $\omega_{b}$-partite graph for $\omega_{b}\ge 3$ $($possibly with some isolated vertices$)$.
\end{cor}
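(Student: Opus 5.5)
We obtain the final corollary by specializing Theorem \ref{Localization of signed walk} to $r=2$. The inequality is immediate; the real content is checking that the equality characterization of the theorem reduces to the stated one.

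First, the inequality. By definition $\zeta_2(v)=\max_{\Gamma'\sim\Gamma} w_2^+(v)$. A walk with two vertices starting at $v$ is just an edge at $v$. Switching at a vertex subset never changes the underlying graph. Switching at $\{v\}$ alone turns every edge at $v$ positive, so the maximum is attained and equals $d_\Gamma(v)$; this matches the value $\zeta_2(v)=d_\Gamma(v)$ already recorded in the paper. Substituting $r=2$ into \eqref{signed walk and largest eigenvalue} gives
\[
\lambda_1^2(\Gamma)\le \sum_{v\in V(\Gamma)} d_\Gamma(v)\frac{c_b(v)-1}{c_b(v)}.
\]
We then take square roots, which is legitimate because the right-hand side is nonnegative. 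We also need $\lambda_1(\Gamma)\ge 0$. This holds since the trace of $A(\Gamma)$ is zero, so the largest eigenvalue is nonnegative, and squaring is therefore monotone on the relevant range. (If $\Gamma$ has no edges both sides vanish.)

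Second, the equality case, where we read off Theorem \ref{Localization of signed walk} with $r=2$. Since $r=2$ is even, the extra regularity requirement in case (ii) disappears. So for $\omega_b=2$ equality holds exactly when $\Gamma$ is switching equivalent to a balanced complete bipartite graph, and for $\omega_b\ge 3$ exactly when it is switching equivalent to a balanced complete regular $\omega_b$-partite graph, up to isolated vertices in both cases. Equality in the squared inequality is equivalent to equality after taking square roots, so this is precisely the stated characterization.

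The one point needing care is the degenerate case $\omega_b=1$, where $\Gamma$ has no edges. Theorem \ref{Localization of signed walk} assumes at least one edge, so there we verify directly that both sides are $0$. This case is also excluded implicitly by the listed extremal families. Apart from that, the only step requiring justification is the identification $\zeta_2(v)=d_\Gamma(v)$, which is elementary.

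Finally, comparing with Corollary \ref{positive degree and localized clique}: there the bound uses $d^+_{\Gamma'}(v)$, and since $d^+_{\Gamma'}(v)\le d_\Gamma(v)$ that bound is at least as strong. This explains the phrase ``weaker version'' but is not needed for the proof.
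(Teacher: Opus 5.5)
Your argument is the paper's own: the corollary is Theorem~\ref{Localization of signed walk} at $r=2$, using $\zeta_2(v)=d_\Gamma(v)$. Since $r=2$ is even, the regularity clause in case (ii) drops out.

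One justification needs fixing. Switching at $\{v\}$ reverses the sign of \emph{every} edge at $v$, so it turns the positive edges at $v$ negative. To make all edges at $v$ positive, switch instead at $U=\{u\in N_\Gamma(v)\colon \sigma(uv)=-1\}$ (when $U\neq\emptyset$). This flips exactly the negative edges at $v$ and leaves the other edges at $v$ unchanged. Since switching preserves the underlying graph, this yields $\zeta_2(v)=d_\Gamma(v)$.

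Also, for an edgeless $\Gamma$ equality holds trivially ($0=0$), but $\Gamma$ is not in the listed families. So this case is not ``excluded by the families''; it is excluded by the tacit hypothesis, inherited from Theorem~\ref{Localization of signed walk}, that $\Gamma$ has at least one edge.
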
	

At the end of this section, we give an alternative proof of Corollary \ref{local refinement of signed graph version of Wilf's theorem}.

Let $G$ be a graph with $V(G)=\{1,2,\ldots,n\}$ and no isolated vertices. Let $A(G)$ denote the adjacency matrix of $G$, and let $W(G)=(w_{ij})$ denote the $n \times n$ symmetric matrix with $w_{ij}=\frac{1}{2}\left(\frac{c(i)}{c(i)-1}+\frac{c(j)}{c(j)-1}\right)$, where $c(i)$ denotes the order of the largest clique containing vertex $i$ in $G$. Define
$$
S^{+}_{n}=\left \{(x_{1},x_{2},\dots,x_{n})^T\in \mathbb{R}^{n}\colon x_i \geq 0,\ i \in \{1,2,\ldots,n\}\ \text{and}\ \sum_{i=1}^n x_i = 1 \right\}.
$$

\begin{lemma}\emph{(See \cite[Lemma~3.2]{Liu-Ning-AR-2025})}\label{Liu-Ning-AR-2025-lemma-1}
	Let $G$ be a graph of order $n$ with $\omega(G) = \omega$ and no isolated vertices, and let
	$\mathbf{x} \in S^{+}_{n}$. Then
	\[
	\mathbf{x}^{\mathrm{T}} \bigl(W(G) \circ A(G)\bigr) \mathbf{x} \leq 1.
	\]
	Equality holds if and only if the induced subgraph of $G$ on $\mathrm{supp}(\mathbf{x})$ is a complete $\omega$-partite graph
	whose vertex classes $V_1, V_2, \dots, V_\omega$ satisfy $\sum_{v \in V_i} x_v = \frac{1}{\omega}$ for all $i \in \{1,2,\ldots, \omega\}$.
\end{lemma}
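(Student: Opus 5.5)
The idea is to reduce to the clique-restricted case by a standard Motzkin--Straus-type argument.

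\emph{Step 1: the quadratic form as a weighted sum over cliques.} For $\mathbf{x}\in S^{+}_{n}$ we have
\[
\mathbf{x}^{\mathrm{T}}\bigl(W(G)\circ A(G)\bigr)\mathbf{x}=\sum_{ij\in E(G)}\left(\frac{c(i)}{c(i)-1}+\frac{c(j)}{c(j)-1}\right)x_ix_j .
\]
Define $\beta(i)=\frac{c(i)}{c(i)-1}$. This is well defined because $G$ has no isolated vertices, so $c(i)\ge 2$.

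\emph{Step 2: reduce to a clique support.} Take a maximizer $\mathbf{x}$ of the form over the simplex $S^{+}_{n}$ with minimal support. Suppose $i,j\in \mathrm{supp}(\mathbf{x})$ are nonadjacent. Then the form restricted to the segment $\mathbf{x}+t(\mathbf{e}_i-\mathbf{e}_j)$ is linear in $t$, since the coefficient of $t^2$ is $w_{ij}a_{ij}\cdot(-2)=0$. So we may shift all of the mass from one of $i,j$ to the other without decreasing the value. This reduces the support and contradicts minimality, exactly as in Lemma~\ref{induces a balanced clique}.

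Hence $K=\mathrm{supp}(\mathbf{x})$ is a clique of some order $k$. Every $i\in K$ has $c(i)\ge k$, so $\beta(i)\le \frac{k}{k-1}$. Therefore
\[
F(\mathbf{x})\le \frac{2k}{k-1}\sum_{i<j\in K}x_ix_j\le \frac{2k}{k-1}\cdot\frac{k-1}{2k}=1,
\]
where the last inequality is $\sum_{i<j}x_ix_j\le\frac12\bigl(1-\sum_i x_i^2\bigr)\le\frac12\bigl(1-\frac1k\bigr)$ by Cauchy--Schwarz. This proves the inequality.

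\emph{Step 3: the equality case.} Sufficiency is a direct computation. On a complete $\omega$-partite graph every vertex has $c=\omega$, so the form equals $\frac{2\omega}{\omega-1}\sum_{i<j}X_iX_j$, where $X_i$ is the mass on part $V_i$. This equals $1$ exactly when all $X_i=\frac1\omega$.

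For necessity, suppose $F(\mathbf{x})=1$ for some $\mathbf{x}$, not necessarily of minimal support. Then $\mathbf{x}$ is a global maximizer on the face of the simplex spanned by $\mathrm{supp}(\mathbf{x})$. I would use the KKT conditions there: all partial derivatives $\partial F/\partial x_v$ are equal (to $2$) on the support. Combined with the reduction above, I would show the following.
\begin{enumerate}
\item Nonadjacency within the support is an equivalence relation, which gives a complete multipartite structure. If $i\not\sim j$ but $j\sim l$ with $i\not\sim l$ failing, the segment trick followed by a second-order perturbation should strictly increase $F$.
\item Collapsing each class to a single vertex yields a clique maximizer with equality. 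Equality in Cauchy--Schwarz and in $\beta(i)\le\frac{k}{k-1}$ then forces equal class masses $\frac1k$ and $c(i)=k$ for the support vertices.
\item $k=\omega$. Here I would try to derive a contradiction with $k<\omega$ from vertices outside the support. This step is delicate, because equality might plausibly hold with $k<\omega$ in a component that does not contain a maximum clique. So I would follow the argument of \cite{Liu-Ning-AR-2025}, or interpret $\omega$ as the clique number of $G[\mathrm{supp}(\mathbf{x})]$.
\end{enumerate}

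\emph{Main obstacle.} I expect the main difficulty to be the necessity direction, in particular proving that the support induces a complete multipartite graph rather than merely containing a clique maximizer. The tool I would try first is the standard Motzkin--Straus equality analysis: for a nonedge $ij$ in the support, linearity of $F$ along $\mathbf{e}_i-\mathbf{e}_j$ together with KKT forces $N(i)\cap\mathrm{supp}=N(j)\cap\mathrm{supp}$ whenever weights agree. Since the weights $\beta$ vary by vertex, this requires extra care.
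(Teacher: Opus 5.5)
Your proof of the inequality (Steps 1--2) is correct. It is the same minimal-support reduction followed by the clique bound that the paper uses for the signed case in Lemmas~\ref{induces a balanced clique} and~\ref{F(z)}; the paper gives no proof of this lemma itself and imports it from Liu--Ning. Your sufficiency argument is also fine once you note that $c(v)$ is computed in $G$. Every $v\in\mathrm{supp}(\mathbf{x})$ lies in an $\omega$-clique of $G[\mathrm{supp}(\mathbf{x})]$, and $c(v)\le\omega(G)=\omega$, so $c(v)=\omega$.

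\textbf{The gap is the necessity direction.} Your item~3 cannot be completed, because the ``only if'' part is false as stated. Let $G$ be a triangle on $\{1,2,3\}$ together with the path $3\,4\,5$ (or simply $K_2\cup K_3$), and let $\mathbf{x}=\frac12(\mathbf{e}_4+\mathbf{e}_5)$. Then $c(4)=c(5)=2$, so $\mathbf{x}^{\mathrm T}\bigl(W(G)\circ A(G)\bigr)\mathbf{x}=(2+2)\cdot\frac14=1$. Yet $G[\{4,5\}]=K_2$ is not complete $3$-partite. The true statement is this: equality holds if and only if $G[\mathrm{supp}(\mathbf{x})]$ is complete $t$-partite for some $t\ge 2$, $c(v)=t$ for every $v\in\mathrm{supp}(\mathbf{x})$, and every class has mass $\frac1t$. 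Simply reinterpreting $\omega$ as $\omega(G[\mathrm{supp}(\mathbf{x})])$, as you suggest, breaks the ``if'' direction. In $K_4$ with $\mathbf{x}=\frac12(\mathbf{e}_1+\mathbf{e}_2)$ the form equals $\frac23$. The corrected version still suffices for Proposition~\ref{signed degree and localized clique}: there the support contains both ends of every edge of $G-S$, so $t=\omega(G-S)$.

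Your items 1--2 are also only sketched, but the missing step is short and needs no KKT or collapsing. Let $T=\mathrm{supp}(\mathbf{x})$, and suppose $i,j,l\in T$ with $ij,jl\notin E(G)$ and $il\in E(G)$. Put $\mathbf{d}=\mathbf{e}_i+\mathbf{e}_l-2\mathbf{e}_j$. Then $\mathbf{x}+s\mathbf{d}\in S^{+}_{n}$ for all small $|s|$. Because the diagonal of $W(G)\circ A(G)$ vanishes and $a_{ij}=a_{jl}=0$,
\[
F(\mathbf{x}+s\mathbf{d})=1+as+\bigl(\beta(i)+\beta(l)\bigr)s^{2}
\]
for some constant $a$. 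This exceeds $1$ for a suitable small $s$, contradicting the inequality you already proved. So nonadjacency is transitive on $T$, and $G[T]$ is complete $t$-partite. No condition on the weights is needed, so your ``main obstacle'' does not arise.

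It follows that every $v\in T$ lies in a $t$-clique, hence $\beta(v)\le\frac{t}{t-1}$, and
\[
1=F(\mathbf{x})\le\frac{2t}{t-1}\sum_{a<b}X_aX_b\le 1 .
\]
Equality forces all $X_a=\frac1t$. Since each $v\in T$ has a neighbour in $T$ of positive mass, it also forces $c(v)=t$ for all $v\in T$. This is exactly the corrected characterization.
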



\begin{prop}\label{signed degree and localized clique}
	Let $\Gamma=(G,\sigma)$ be a signed graph. Then
	\begin{equation}\label{signed degree and localized clique-1}
		\sum_{v\in V(\Gamma)}d_{\Gamma^{\prime}}^{+}(v)\frac{c_b(v)-1}{c_b(v)}\le \left(\sum_{v\in V(\Gamma)}\frac{c_b(v)-1}{c_b(v)}\right)^2,
	\end{equation}
	where $\Gamma^{\prime}$ is a signed graph such that $\Gamma^{\prime}\sim \Gamma$. 	Equality holds if and only if $\Gamma$ is switching equivalent to a complete regular multi-partite signed graph with all edges positive by adding $|E^{-}(\Gamma^{\prime})|$ negative edges, up to isolated vertices.
\end{prop}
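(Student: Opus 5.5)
Let $H=\Gamma'-S$, where $S$ is the set of underlying edges of $E^-(\Gamma')$; all edges of $H$ are positive and $d_H(v)=d^+_{\Gamma'}(v)$. Every clique of $H$ is an all-positive, hence balanced, clique of $\Gamma'$. Switching preserves balance, so it is also a balanced clique of $\Gamma$. Therefore $c_H(v)\le c_b(v)$ for every vertex $v$. Since $t\mapsto (t-1)/t$ is increasing, it suffices to prove the unsigned inequality
\[
\sum_{v} d_H(v)\frac{c_H(v)-1}{c_H(v)}\le \Bigl(\sum_v \frac{c_H(v)-1}{c_H(v)}\Bigr)^2
\]
and then replace $c_H$ by $c_b$ in each factor. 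On the left the weights $d_H(v)\ge 0$, and on the right we square a sum of nonnegative terms, so both replacements only increase the bound. Isolated vertices of $H$ have $c_H(v)=1$ and contribute zero to both sides, so we may delete them and apply Lemma~\ref{Liu-Ning-AR-2025-lemma-1} to $H$.

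For the unsigned inequality, write $\beta(v)=\frac{c_H(v)-1}{c_H(v)}$ and $T=\sum_v\beta(v)$. Take $x_v=\beta(v)/T\in S^+_n$. Then
\[
\mathbf{x}^T(W\circ A)\mathbf{x}=\frac{1}{T^2}\sum_{uv\in E(H)}\Bigl(\tfrac{1}{\beta(u)}+\tfrac{1}{\beta(v)}\Bigr)\beta(u)\beta(v)=\frac{1}{T^2}\sum_{uv\in E(H)}(\beta(u)+\beta(v)).
\]
The last sum equals $\frac{1}{T^2}\sum_v d_H(v)\beta(v)$. Lemma~\ref{Liu-Ning-AR-2025-lemma-1} bounds this by $1$, which is exactly the desired inequality. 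So the inequality reduces to that lemma with the "localized weight" test vector.

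For equality, all three steps must be tight. First, equality in the lemma forces $\mathrm{supp}(\mathbf{x})$ (all non-isolated vertices of $H$) to induce a complete $\omega$-partite graph $H$ with $\sum_{v\in V_i}x_v=1/\omega$. In such a graph every vertex has $c_H(v)=\omega$, so $x$ is uniform, and the part condition forces all parts to have equal size. Thus $H$ is complete regular multipartite up to isolated vertices. Second, equality in the replacement $c_H\le c_b$ requires $c_b(v)=c_H(v)$ on all vertices. For the right-hand side, this is needed also on vertices isolated in $H$: such a vertex must have $c_b(v)=1$, so it is isolated in $\Gamma$. Hence $\Gamma'$ is this regular complete multipartite positive graph plus the negative edges in $S$, up to isolated vertices. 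Conversely, if $\Gamma'$ has that form, then adding negative edges does not create a larger balanced clique through any vertex, so $c_b=c_H$ and all steps are equalities.

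I expect the main obstacle to be this last point: justifying $c_b(v)=c_H(v)$ in the converse. This needs care, since a balanced clique of $\Gamma$ might use negative edges of $\Gamma'$, and I would handle it using the minimality hypothesis implicit in the extremal description (e.g.\ $|E^-(\Gamma')|=\epsilon(\Gamma)$, as in Theorem~\ref{signed graph version of Adak-Chandran theorem}). The intended form of the statement presumably matches that of Theorem~\ref{signed graph version of Adak-Chandran theorem}, so I would mirror that characterization.
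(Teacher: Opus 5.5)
\textbf{There is a genuine gap in your reduction step.} Write $\beta_H(v)=\frac{c_H(v)-1}{c_H(v)}$, $\beta_b(v)=\frac{c_b(v)-1}{c_b(v)}$, $L(\beta)=\sum_v d_H(v)\beta(v)$ and $R(\beta)=\bigl(\sum_v\beta(v)\bigr)^2$. You prove $L(\beta_H)\le R(\beta_H)$ and then note that passing to $\beta_b\ge\beta_H$ increases both sides. But increasing the left-hand side is the wrong direction. The facts $L(\beta_H)\le R(\beta_H)$, $L(\beta_b)\ge L(\beta_H)$ and $R(\beta_b)\ge R(\beta_H)$ say nothing about $L(\beta_b)$ versus $R(\beta_b)$.

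What is missing is a comparison of the two increments, and it can be supplied. Set $\delta_v=\beta_b(v)-\beta_H(v)\ge 0$, $\Delta=\sum_v\delta_v$ and $T_H=\sum_v\beta_H(v)$. Then $R(\beta_b)-R(\beta_H)=2T_H\Delta+\Delta^2$ and $L(\beta_b)-L(\beta_H)=\sum_v d_H(v)\delta_v$. Every non-isolated vertex of $H$ has $\beta_H\ge\frac12$, and a vertex $v$ with $d_H(v)\ge 1$ together with its neighbours gives $d_H(v)+1$ such vertices. Hence $2T_H\ge d_H(v)$ for all $v$, and the right-hand increment dominates.

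The paper avoids the issue by choosing the test vector differently. It applies Lemma~\ref{Liu-Ning-AR-2025-lemma-1} to $W(G-S)\circ A(G-S)$, whose weights come from $c_H$, but with $x_v=\beta_b(v)/T_b$, where $T_b=\sum_v\beta_b(v)$. For each edge $uv$ of $H$ it writes $\beta_b(u)+\beta_b(v)=\bigl(\frac{1}{\beta_b(u)}+\frac{1}{\beta_b(v)}\bigr)\beta_b(u)\beta_b(v)\le\bigl(\frac{1}{\beta_H(u)}+\frac{1}{\beta_H(v)}\bigr)\beta_b(u)\beta_b(v)$. The monotonicity of $t\mapsto t/(t-1)$ then runs in the correct direction, and the lemma finishes in one step. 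It is applied on the non-isolated vertices of $H$, where the restricted vector has mass at most $1$.

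\textbf{Equality.} With the increment argument in place, your equality analysis is sound. The total slack is $R(\beta_H)-L(\beta_H)+\Delta^2+\sum_v\bigl(2T_H-d_H(v)\bigr)\delta_v$, and every term is nonnegative. So equality forces equality in the lemma and $\Delta=0$, i.e.\ $c_b=c_H$ everywhere. This gives exactly the structure you describe, with every negative edge joining two vertices of the same part.

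\textbf{Converse.} You do not need $|E^-(\Gamma')|=\epsilon(\Gamma)$ here. A balanced clique $Q$ of $\Gamma'$ splits as $X\cup Y$, with $X$ and $Y$ positive cliques and all $X$--$Y$ edges negative. Positive edges join different parts and negative edges lie inside parts. So if $X,Y\ne\emptyset$, all of $Q$ lies in one part, while two vertices of $X$ (or of $Y$) would have to lie in different parts; hence $|Q|\le 2$, and $c_b=c_H$.

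The structure does matter, however, and the paper's one-line ``we observe that $c_b'(v)=c_b(v)$'' glosses over it. If a negative edge may attach to a vertex that is isolated in $H$, the converse fails. For example, take $\Gamma'$ with positive edge $12$ and negative edges $13,23$. The triangle is balanced, so $c_b\equiv 3$, and the left side is $\frac43$ while the right side is $4$.
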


\begin{proof}	
	Let
	\[
	\mathbf{x}=\left(\frac{c_b(v_1)-1}{c_b(v_1)}, \frac{c_b(v_2)-1}{c_b(v_2)}, \dots, \frac{c_b(v_n)-1}{c_b(v_n)}\right)^T~\text{for $v_i\in V(\Gamma)$},~1\le i \le n,
	\]
	and
	$$
	\left\|\mathbf{x}\right\|_1= \sum_{i=1}^n\frac{c_b(v_i)-1}{c_b(v_i)} .
	$$
	Then $\frac{\mathbf{x}}{\left\|\mathbf{x}\right\|_1}\in S^{+}_{n}$.
	
	
	For any $\Gamma^{\prime} \sim \Gamma$, let $S$ be the set of underlying edges corresponding to $E^{-}(\Gamma')$, and let $G-S$ be the underling graph of $\Gamma^{\prime}-S$. Denote by $c_b^{\prime}(v)$ the order of the largest clique containing vertex $v$ in $\Gamma^{\prime}-S$ and $c^{\prime}(v)$ the order of the largest clique containing vertex $v$ in $G-S$. Note that $\Gamma^{\prime}-S$ is a signed graph with all edges positive. Hence, $c_b^{\prime}(v)=c^{\prime}(v)$. By Lemma \ref{Liu-Ning-AR-2025-lemma-1} and $c_b^{\prime}(v)\le c_b(v)$, we have
	\begin{align*}
		\sum_{v\in V(\Gamma)}d_{\Gamma^{\prime}}^{+}(v)\frac{c_b(v)-1}{c_b(v)}&=\sum_{uv\in E^+(\Gamma^{\prime})}\left(\frac{c_b(u)-1}{c_b(u)}+\frac{c_b(v)-1}{c_b(v)}\right)\\
		&=\sum_{uv\in E^+(\Gamma^{\prime})}\left(\frac{c_b(u)}{c_b(u)-1}+\frac{c_b(v)}{c_b(v)-1}\right)\frac{(c_b(u)-1)(c_b(v)-1)}{c_b(u)c_b(v)}\\
		&\le \sum_{uv\in E^+(\Gamma^{\prime})}\left(\frac{c_b^{\prime}(u)}{c_b^{\prime}(u)-1}+\frac{c_b^{\prime}(v)}{c_b^{\prime}(v)-1}\right)\frac{(c_b(u)-1)(c_b(v)-1)}{c_b(u)c_b(v)}\\
		&=\sum_{uv\in E^+(\Gamma^{\prime})}\left(\frac{c^{\prime}(u)}{c^{\prime}(u)-1}+\frac{c^{\prime}(v)}{c^{\prime}(v)-1}\right)\frac{(c_b(u)-1)(c_b(v)-1)}{c_b(u)c_b(v)}\\
		&= \left(\frac{\mathbf{x}}{\left\|\mathbf{x}\right\|_1}\right)^T  \bigl(W(G-S) \circ A(G-S)\bigr) \left(\frac{\mathbf{x}}{\left\|\mathbf{x}\right\|_1}\right) \left\|\mathbf{x}\right\|^2_1 \\
		&\le \left\|\mathbf{x}\right\|^2_1 =\left(\sum_{v\in V(\Gamma)}\frac{c_b(v)-1}{c_b(v)}\right)^2.
	\end{align*}

	Finally, we characterize the graphs attaining the upper bounds in \eqref{signed degree and localized clique-1}. From the proof above, we know that equality holds if and only if $c_b^{\prime}(v)= c_b(v)$ and
	$$
	\left(\frac{\mathbf{x}}{\left\|\mathbf{x}\right\|_1}\right)^T  \bigl(W(G-S) \circ A(G-S)\bigr) \left(\frac{\mathbf{x}}{\left\|\mathbf{x}\right\|_1}\right)=1.
	$$
	Let $\omega(G-S)=\omega'$. Hence, by Lemma \ref{Liu-Ning-AR-2025-lemma-1}, we obtain that $\mathrm{supp}(\mathbf{x})$ is a complete $\omega'$-partite graph whose vertex classes $V_{1}, V_{2}, \ldots, V_{\omega'}$ satisfy
	\[
	\left\|\mathbf{1}_{V_{i}} \circ \mathbf{x}\right\|_{1}=\frac{1}{\omega'}
	\]
	for all $i \in\left\{1,2,\ldots,\omega'\right\}$. Note that
	\[
	\frac{1}{\omega'}=\left\|\mathbf{1}_{V_{i}} \circ \mathbf{x}\right\|_{1}=\frac{1}{\|\mathbf{x}\|_{1}} \sum_{v \in V_{i}} \frac{c^{\prime}(v)-1}{c^{\prime}(v)}=\left(1-\frac{1}{\omega'}\right) \frac{\left|V_{i}\right|}{\|\mathbf{x}\|_{1}},
	\]
	for any $i \in\left\{1,2,\ldots,\omega'\right\}$.
	Then we have
	\[
	\left|V_{1}\right|=\left|V_{2}\right|=\dots=\left|V_{\omega'}\right| =\frac{\|\mathbf{x}\|_{1}}{\omega'-1}.
	\]
	Hence, $\Gamma^{\prime}-S=(G-S,+)$ is, up to isolated vertices, a complete regular multi-partite graph of all positive edges. Hence, $\Gamma^{\prime}$ is obtained from a complete regular multi-partite signed graph with all edges positive by adding $|S|$ negative edges, up to isolated vertices. At this point, we observe that $\Gamma^{\prime}$ satisfies $c_b^{\prime}(v)= c_b(v)$ for any $v\in V(\Gamma)$.
	
	This completes the proof. \qed
\end{proof}

\begin{Tproof}\textbf{~of~Corollary~\ref{local refinement of signed graph version of Wilf's theorem}.}~By Proposition  \ref{signed degree and localized clique} and Corollary \ref{positive degree and localized clique}, we have
	\[
	\lambda_1(\Gamma)\le \sqrt{\sum_{v\in V(\Gamma)}d_{\Gamma^{\prime}}^{+}(v)\frac{c_b(v)-1}{c_b(v)}}\le\sum_{v\in V(\Gamma)}\frac{c_b(v)-1}{c_b(v)},
	\]
	yielding \eqref{local refinement of signed graph version of Wilf's theorem}.
	
	Furthermore, if equality holds in \eqref{Localization of signed graph version of Wilf's theorem}, then equalities must hold in both \eqref{signed degree and localized clique-1} and \eqref{vertex clique}. Clearly, equality holds in \eqref{Localization of signed graph version of Wilf's theorem} if and only if $\Gamma$ is switching equivalent to, up to isolated vertices, a balanced complete regular multi-partite graph.
	
	This completes the proof. \qed
\end{Tproof}

\begin{remark}
	{\em
		Suppose that $\Gamma$ is a connected signed graph. Let $2\le t\le \omega_b(\Gamma)$ and
		\[
		f(t)=\left(C_1 +d_{\Gamma^{\prime}}^{+}(u)\left(1-\frac{1}{t}\right)\right)-\left(C_2 +\left(1-\frac{1}{t}\right)\right)^2,
		\]
		where $C_1=\sum_{v\in V(\Gamma-u)}d_{\Gamma^{\prime}}^{+}(v)\frac{c_b(v)-1}{c_b(v)}$ and $C_2=\sum_{v\in V(\Gamma-u)}\frac{c_b(v)-1}{c_b(v)}$. By a simple computation, we have
		\[
		f^{\prime}(t)=\frac{d_{\Gamma^{\prime}}^{+}(u)+\frac{2}{t}-2(C_2+1)}{t^2}.
		\]
		Note that $2c_b(v)-2\ge c_b(v)$ for any $v\in V(\Gamma)$. Then
		\begin{align*}
			d_{\Gamma^{\prime}}^{+}(u)+\frac{2}{t}-2(C_2+1)&\le d_{\Gamma^{\prime}}^{+}(u)-2C_2 -1\\
			&=d_{\Gamma^{\prime}}^{+}(u)-\sum_{v\in V(\Gamma-u)}\frac{2c_b(v)-2}{c_b(v)} -1\\
			&\le d_{\Gamma^{\prime}}^{+}(u)-(n-1)-1\\
			&=d_{\Gamma^{\prime}}^{+}(u)-n\\
			&<0.
		\end{align*}
		Thus, $f(t)$ is monotonically decreasing with respect to $t$.
		
		Since $2\le c_b(v)\le \omega_b(\Gamma)$ for all $v\in V(\Gamma)$, we have
		\begin{align*}
			0&\ge \left(\sum_{v\in V(\Gamma)}d_{\Gamma^{\prime}}^{+}(v)\frac{c_b(v)-1}{c_b(v)}\right)-\left(\sum_{v\in V(\Gamma)}\frac{c_b(v)-1}{c_b(v)}\right)^2 \\
			& \ge \left(\sum_{v\in V(\Gamma)}d_{\Gamma^{\prime}}^{+}(v)\frac{\omega_b(\Gamma)-1}{\omega_b(\Gamma)}\right) -\left(\sum_{v\in V(\Gamma)}\frac{\omega_b(\Gamma)-1}{\omega_b(\Gamma)}\right)^2 \\
			&=2|E^{+}(\Gamma^{\prime})|\left(\frac{\omega_b(\Gamma)-1}{\omega_b(\Gamma)} \right) -n^2\left(\frac{\omega_b(\Gamma)-1}{\omega_b(\Gamma)}\right)^2,
		\end{align*}
		that is,
		\[
		|E^{+}(\Gamma^{\prime})|\le \frac{n^2}{2} \left(1-\frac{1}{\omega_b(\Gamma)}\right).
		\]
		Obviously, we can always choose a signed graph $\Gamma^{\prime}$ such that $\Gamma^{\prime}\sim \Gamma$ and $|E^{+}(\Gamma^{\prime})|=m-\epsilon(\Gamma)$.
		
		If $\Gamma$ is disconnected, we may proceed similarly by considering each connected component individually.
		
		Therefore, Proposition \ref{signed degree and localized clique} implies \eqref{Kan-Pragada-LAA-2023-m-equation} (See \cite[Theorem~3.7]{Kan-Pragada-LAA-2023}).
	}
\end{remark}

\end{document}